\newtheorem{theorem}{Theorem}
\newtheorem{corollary}[theorem]{Corollary}
\newtheorem{lemma}[theorem]{Lemma}
\newtheorem{definition}[theorem]{Definition}
\newenvironment{proof}[1][Proof]{\noindent\textbf{#1.} }{\ \rule{0.5em}{0.5em}}
\newenvironment{t_enumerate}{
\begin{enumerate}
\setlength{\itemsep}{1pt}
\setlength{\parskip}{0pt}
\setlength{\parsep}{0pt}}{\end{enumerate}
}
\numberwithin{equation}{section}
\numberwithin{theorem}{section}
\begin{document}

\date{}
\title{Isotropic foliations of coadjoint orbits\\from the Iwasawa decomposition}
\author{William D. Kirwin\thanks{Mathematisches Institut, University of Cologne, Weyertal 86--90, Cologne, Germany.\newline Email: will.kirwin@gmail.com}}
\maketitle

\begin{abstract}
Let $G$ be a noncompact real semisimple Lie group. The regular coadjoint orbits of $G$ can be partitioned into a finite set of types. We show that on each regular orbit, the Iwasawa decomposition induces a left-invariant foliation which is isotropic with respect to the Kirillov symplectic form. Moreover, the leaves are affine subspaces of the dual of the Lie algebra, and the dimension of the leaves depends only on the type of the orbit. When $G$ is a split real form, the foliations induced from the Iwasawa decomposition are actually Lagrangian fibrations with a global transverse Lagrangian section.
\end{abstract}

\medskip\noindent\textbf{Keywords}:
\begin{tabular}
[t]{l}%
coadjoint orbit, Iwasawa decomposition, isotropic foliation, Lagrangian fibration,\\
Cartan subalgebra, conjugacy class 
\end{tabular}

\noindent\textbf{MSC(2000)}: 51N30 (also 14L35) Primary; 53D12, 53C12, 57S20 Secondary.

\section{Introduction}

Let $G$ be a noncompact real semisimple Lie group with Lie algebra $\mathfrak{g.}$ The Iwasawa decomposition of $G$ is a group decomposition $G=NAK$, where $N$ is a nilpotent analytic\footnote{Recall that in the context of Lie groups, an analytic subgroup is simply a connected Lie subgroup.} subgroup, $A$ is an abelian analytic subgroup, and $K$ is a maximal compact analytic subgroup \cite{Iwasawa49}; it is unique up to conjugation.

Let $X\in\mathfrak{g}^{\ast}$ be a regular point in the dual of the Lie algebra, and denote by $\Omega=Ad_{G}^{\ast}X$ the coadjoint orbit through $X$. Since $G$ acts transitively on $\Omega,$ the coadjoint orbit is a homogeneous space $\Omega\simeq G/G_{X}.$ For a regular point $X$, the stabilizer subgroup is always conjugate to a subgroup of $AK$, whence the nilpotent part $N$ acts on the coadjoint orbit $\Omega$, yielding a foliation of $\Omega$ by $N$-orbits.

Each coadjoint orbit $\Omega$ carries a canonical symplectic structure $\omega$ called the Kirillov symplectic structure. In this paper, we investigate the interplay between the Iwasawa decomposition, in particular the foliations of coadjoint orbits by $N$-orbits, and the Kirillov symplectic structure. In general, the $N$-orbits do not have any nice symplectic structure, i.e., they are not necessarily symplectic, isotropic, coisotropic, or Lagrangian, nor are they well behaved with respect to the transitive left $G$ action on $\Omega$ (since $N$ and $AK$ do not commute in general).

On the other hand, we will show here that there is always a nilpotent analytic subgroup of $N$ (in some cases $N$ itself) whose orbits in $\Omega$ yield a left-invariant isotropic foliation of $\Omega$. The set of regular orbits can be partitioned into a finite set of classes, corresponding to the conjugacy class of the stabilizer, which we refer to as the \emph{type} of $\Omega.$ The subgroup of $N$ which induces a left-invariant isotropic foliation of $\Omega$ is completely determined by the type of $\Omega$.

Moreover, when $\Omega$ is considered as an embedded submanifold of $\mathfrak{g}^{*}$ in the natural way, the leaves of this left-invariant isotropic foliation are affine subspaces of $\mathfrak{g}^{*}$. Classically, foliations of surfaces by affine lines are known as \emph{rulings} (similarly or foliations of complex surfaces by complex lines), so we call these
foliations $n$-rulings, where $n$ is the dimension of the leaves. Indeed, for $G=SL(2,\mathbb{R})$, the foliations induced by the (two) choices of Iwasawa decomposition yield exactly the two rulings of the one-sheeted hyperboloid by lines. It sometimes happens, as in the case of the one-sheeted hyperboloid-type orbits of $SL(2,\mathbb{R})$, that the left-invariant isotropic foliation induced by a subgroup of $N$ is actually a Lagrangian fibration. For such a coadjoint orbit $\Omega=Ad_{G}^{*}\xi$, the submanifold $Ad_{K}^{*}\xi$ turns out to be a global  transverse Lagrangian.

\bigskip
The original motivation for this work was to find examples of Lagrangian fibrations with noncompact fibers, for example to apply the techniques of Lagrangian Floer homology (see, for example, \cite{Abbondandolo-Schwarz},\cite{Salamon-Weber},\cite{Viterbo}) for cotangent bundles in new settings. As it turns out, any of the left-invariant isotropic foliations constructed here which are Lagrangian also admit a transverse Lagrangian submanifold and hence are symplectomorphic to the cotangent bundle of that submanifold, and so in some sense noncompact coadjoint orbits do not furnish ``new'' examples of Lagrangian fibrations with noncompact fibers. On the other hand, for each of these cotangent-type coadjoint orbits, the nonuniqueness of the Iwasawa decomposition yields several distinct symplectomorphisms with the cotangent bundle of the transverse submanifold, and the Weyl group acts transitively on the set of the cotangent-type Lagrangian fibrations. Presumably, this means that the Floer homologies are representations of the associated Weyl group, which may lead to interesting additional structure in the the homology groups, but we will not address these points in this note.

Of course, coadjoint orbits, and in particular their symplectic geometry, have been the subject of much scrutiny in the literature, and we cannot hope here to give a complete list of references. A good general reference for the geometry of coadjoint orbits is the book \cite{Duistermaat-Kolk} by Duistermaat and Kolk. In \cite{Bernatska-Holod}, Bernatska and Holod study the coadjoint orbits of compact Lie groups (a focus which is somewhat complementary to the one taken here).

Our classification of regular coadjoint orbits is based on an invariant of conjugacy classes of Cartan subalgebras which was introduced by Sugiura in \cite{Sugiura} and \cite{Sugiura-correction}. In \cite{KostantCSA}, Kostant also studied the problem of classification of conjugacy classes of Cartan subalgebras and provided a set of theorems which achieves essentially the same result as Sugiura, although the invariants he constructs are slightly more complicated, and Sugiura's invariant is quite well suited to our application. In \cite{Rothschild}, Rothschild classifies the orbits in a real reductive Lie algebra in terms of the intersection of the Lie algebra with the orbits in its complexification.

\bigskip
We give here an outline of the rest of the paper. In the remainder of the introduction, we will state our main results more precisely.

In Section \ref{sec:setup}, we recall (mostly to fix notation) some standard material regarding real Lie algebras. First, we specify the algebras which we will study, then we recall Cartan subalgebras and their basic properties. This allows us to define regular coadjoint orbits. Section \ref{sec:CSAs+roots} recalls the Iwasawa decomposition. Finally, Section \ref{sec:cayley} reviews the theory of Cayley transforms and Sugiura's invariant, which allows us to classify coadjoint orbits according to \emph{type}; since much of the material here is less standard (and indeed since we know of no reference for some of the results), much more detail is included. The section culminates with Theorem \ref{lemma:hj-decomp}, which describes the structure of an arbitrary $\theta$-stable Cartan subalgebras in terms of the root data associated to a maximally noncompact Cartan subalgebra.

Section \ref{sec:foliations} is the heart of the paper. In Section \ref{subsec:maxNC}, we construct the isotropic foliations of ``maximally noncompact'' coadjoint orbits induced from the Iwasawa decomposition. We also classify those orbits for which the induced foliation is actually a Lagrangian fibration. Then, we consider the general case in Section \ref{subsec:nonmaxNC}. Finally, in Section \ref{subsec:type-1}, we show that if the dimension of $G$ is not too small, there are at least two distinct types of orbits which admit nontrivial isotropic foliations induced by the Iwasawa decomposition.

In Section \ref{sec:rulings}, we show that the foliations are actually rulings; that is, that the leaves are affine subspaces of $\mathfrak{g}^{\ast}.$

The paper concludes with an Appendix which contains the specific geometric data of the isotropic foliations for each of the simple real Lie algebras.

\subsection{Main results.}

In the remainder of the introduction, we assume the reader is familiar with the structure theory of simple Lie algebras; the notation and relevant material for the rest of the paper will be introduced in Section \ref{sec:setup}.

Let $G$ be a connected Lie group with real, semisimple Lie algebra $\mathfrak{g}$. By Cartan's criterium, a Lie algebra $\mathfrak{g}$ is semisimple if and only if the Killing form is nondegenerate. We henceforth identify $\mathfrak{g}$ with its dual by the Killing form. Hence, we consider adjoint orbits with symplectic structure equal to the pullback of the Kirillov symplectic form on the coadjoint orbits via the identification.

Let $X\in\mathfrak{g}$ be a regular point, and denote by $\Omega$ the orbit through $X$. Such an orbit is called a regular orbit, and in this article we restrict our attention to such orbits. The stabilizer $\mathfrak{h}$ of $\Omega$ at $X$ is a Cartan subalgebra, and without loss of generality (by simply choosing another point in the orbit) we may assume it to be stable under some choice $\theta$ of Cartan involution. Let $\mathfrak{h}_{0}$ be a maximally noncompact $\theta$-stable Cartan subalgebra, and let $\Delta$ denote the set of roots with respect to $\mathfrak{h}_{0}^{\mathbb{C}}.$

In \cite{Sugiura}, Sugiura shows that conjugacy classes of Cartan subalgebras are in one-to-one correspondence with conjugacy classes of real admissible roots systems. We denote by $\mathbf{F}=\{\alpha_{1},\dots,\alpha_{j}\} $ such a root system. There is a unique conjugacy class of Cartan subalgebras associated to $\mathbf{F}=\emptyset,$ the so-called maximally noncompact Cartan subalgebras. There is also a unique conjugacy class of Cartan subalgebras associated to (the unique conjugacy class of) the maximal real admissible root system. As there is a finite number of conjugacy classes of real admissible root systems, we may partition the set of regular orbits into a finite set of types.

\begin{definition}
The orbit $\Omega$ is said to be of \emph{\textbf{type}} $\mathbf{F}$ if the admissible root system associated to the conjugacy class of stabilizers of $\Omega$ is $\mathbf{F}$. If $\mathbf{F}=\emptyset$, then $\Omega$ is said to be of \emph{\textbf{type}} $\mathbf{0}$.
\end{definition}

Choose an Iwasawa decomposition $G=KAN$ (compatible with the Cartan involution with respect to which $\mathfrak{h}$ is stable), or, at the Lie algebra level, $\mathfrak{g}=\mathfrak{k}\oplus\mathfrak{a}\oplus\mathfrak{n} $. Our first main result is the following theorem.

\setcounter{section}3\setcounter{theorem}{0}
\begin{theorem}
Each type-$0$ orbit $\Omega$ admits a left-invariant isotropic foliation $\mathcal{N}$ with $\mathcal{N}_{X}=\mathfrak{n}$.
\end{theorem}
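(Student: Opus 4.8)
The plan is to exploit that for a type-$\mathbf{0}$ orbit the stabilizer is a maximally noncompact $\theta$-stable Cartan subalgebra, so that $X$ lies in the zero restricted-root space. Concretely, take $X\in\mathfrak{h}_0$ with $\mathfrak{h}_0=\mathfrak{t}_0\oplus\mathfrak{a}$ maximally noncompact and $\theta$-stable, where $\mathfrak{t}_0=\mathfrak{h}_0\cap\mathfrak{k}$ and $\mathfrak{a}=\mathfrak{h}_0\cap\mathfrak{p}$ is the Iwasawa $\mathfrak{a}$. Since $\mathfrak{h}_0$ is abelian and contains $\mathfrak{a}$, every element of $\mathfrak{t}_0$ centralizes $\mathfrak{a}$, so $\mathfrak{t}_0\subseteq\mathfrak{m}:=Z_{\mathfrak{k}}(\mathfrak{a})$ and $\mathfrak{h}_0\subseteq\mathfrak{m}\oplus\mathfrak{a}=\mathfrak{g}_0$, the zero restricted-root space $Z_{\mathfrak{g}}(\mathfrak{a})$. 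I would first record that the map $\xi\mapsto[\xi,X]$ sending $\mathfrak{g}$ onto $T_X\Omega$ has kernel $Z_{\mathfrak{g}}(X)=\mathfrak{h}_0$, and that $\mathfrak{n}\cap\mathfrak{h}_0=0$ because $\mathfrak{n}=\bigoplus_{\lambda>0}\mathfrak{g}_\lambda$ meets $\mathfrak{g}_0$ trivially. Hence $[\mathfrak{n},X]$ is a subspace of $T_X\Omega$ of dimension $\dim\mathfrak{n}$, and I set $\mathcal{N}_X:=[\mathfrak{n},X]$, which we identify with $\mathfrak{n}$.

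Next I would extend $\mathcal{N}_X$ to a distribution $\mathcal{N}$ by left $G$-invariance, i.e.\ $\mathcal{N}_{\mathrm{Ad}_gX}:=\mathrm{Ad}_g[\mathfrak{n},X]=[\mathrm{Ad}_g\mathfrak{n},\mathrm{Ad}_gX]$, and check this is well defined. The only ambiguity comes from the stabilizer $H_0=Z_G(X)$, so it suffices that $\mathrm{Ad}_{H_0}\mathfrak{n}=\mathfrak{n}$. This is exactly where the type-$\mathbf{0}$ hypothesis is used: from $\mathfrak{h}_0\subseteq\mathfrak{g}_0$ and $[\mathfrak{g}_0,\mathfrak{g}_\lambda]\subseteq\mathfrak{g}_\lambda$ we get $[\mathfrak{h}_0,\mathfrak{n}]\subseteq\mathfrak{n}$, so $H_0$ normalizes $N$. (For a non-maximally-noncompact Cartan subalgebra $\mathfrak{h}\not\subseteq\mathfrak{g}_0$, and this fails, which is why only a subalgebra of $\mathfrak{n}$ survives in the general case.) Because $H_0$ normalizes $N$, the set $P'=NH_0$ is a closed subgroup with $H_0\subseteq P'$ (it sits inside the minimal parabolic $MAN$), and the natural fibration $\Omega\simeq G/H_0\to G/P'$ has fibers diffeomorphic to $N$; these fibers are the integral leaves of $\mathcal{N}$. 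The leaf through $X$ is the $N$-orbit $N\cdot X$, which confirms both integrability and $\mathcal{N}_X=[\mathfrak{n},X]$, and left-invariance is built into the construction.

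It then remains to prove that each leaf is isotropic, and by the $G$-invariance of both $\mathcal{N}$ and the Kirillov form $\omega$ it is enough to test this at $X$. Writing $B$ for the Killing form and using the standard formula $\omega_X([\xi,X],[\eta,X])=B(X,[\xi,\eta])$, I must show $B(X,[\xi,\eta])=0$ for all $\xi,\eta\in\mathfrak{n}$. Since $\mathfrak{n}$ is a subalgebra we have $[\xi,\eta]\in\mathfrak{n}$, while $X\in\mathfrak{g}_0$; and the Killing form pairs $\mathfrak{g}_\lambda$ nontrivially only with $\mathfrak{g}_{-\lambda}$, so $\mathfrak{g}_0\perp\mathfrak{n}$ and the pairing vanishes. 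This establishes isotropy. I expect the main obstacle to be not the isotropy computation, which is immediate from root-space orthogonality, but the verification in the previous step that the $N$-orbit distribution is genuinely left-$G$-invariant and assembles into a global foliation; the crux there is the normalization $\mathrm{Ad}_{H_0}\mathfrak{n}=\mathfrak{n}$, which is precisely the structural consequence of $\Omega$ being of type $\mathbf{0}$.
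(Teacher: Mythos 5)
Your proof is correct and follows essentially the same route as the paper's: identify $\mathfrak{n}$ with a subspace of $T_X\Omega$ using the Iwasawa decomposition and the fact that a type-$0$ stabilizer satisfies $\mathfrak{h}_0\subseteq\mathfrak{g}_0$, verify $[\mathfrak{h}_0,\mathfrak{n}]\subseteq\mathfrak{n}$ so the left-invariant extension is well defined, and obtain isotropy from $B(X,[\mathfrak{n},\mathfrak{n}])=0$ via the $B$-orthogonality of restricted root spaces. The only difference is one of packaging: where the paper gets integrability by citing Frobenius (since $\mathfrak{n}$ is a subalgebra), you realize the leaves as fibers of $G/H_0\to G/NH_0$ inside the minimal parabolic, which is a slightly more global, and if anything more careful, treatment of the same step.
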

\setcounter{section}1\setcounter{theorem}{0}

We can go further and completely characterize those orbits for which the induced foliation is actually a Lagrangian fibration.

\setcounter{section}3\setcounter{theorem}{1}
\begin{theorem}
If $\mathfrak{g}$ is a split real form or a complex simple Lie algebra regarded as a real Lie algebra, then the left-invariant isotropic foliation of Theorem \ref{thm:foliation} is a left $G$-invariant Lagrangian $\mathbb{R}^{n}$-fibration over $Ad_{K}X,$ where $n=\dim\mathfrak{n}$, and $Ad_{K}X$ is a transverse Lagrangian submanifold.
\end{theorem}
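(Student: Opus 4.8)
The plan is to upgrade the isotropic foliation $\mathcal{N}$ of Theorem \ref{thm:foliation} to a Lagrangian one by a dimension count, then to exhibit the fibration and the section $Ad_K X$ via the minimal parabolic subgroup, and finally to check that $Ad_K X$ is Lagrangian. Throughout write $\mathfrak{g}=\mathfrak{k}\oplus\mathfrak{p}$ for the Cartan decomposition, $\mathfrak{t}=\mathfrak{h}_0\cap\mathfrak{k}$ and $\mathfrak{a}=\mathfrak{h}_0\cap\mathfrak{p}$, $\mathfrak{m}=\mathfrak{z}_{\mathfrak{k}}(\mathfrak{a})$, $M=Z_K(\mathfrak{a})$, and $P=MAN$ for the minimal parabolic. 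First I would record the leaf dimension. Since $\mathcal{N}_X=[\mathfrak{n},X]$ and $\mathfrak{n}\cap\mathfrak{h}_0=0$, the leaf through $X$ has dimension $\dim\mathfrak{n}$; using $\dim\mathfrak{k}=\dim\mathfrak{m}+\dim\mathfrak{n}$ (each positive restricted root contributes equally to $\mathfrak{k}$ and to $\mathfrak{p}$) together with $\dim\Omega=\dim\mathfrak{g}-\dim\mathfrak{h}_0$ and the Iwasawa identity $\dim\mathfrak{g}=\dim\mathfrak{k}+\dim\mathfrak{a}+\dim\mathfrak{n}$, one finds $\dim\Omega=2\dim\mathfrak{n}+(\dim\mathfrak{m}-\dim\mathfrak{t})$. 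Hence $\dim\mathfrak{n}=\tfrac12\dim\Omega$ exactly when $\mathfrak{m}=\mathfrak{t}$, i.e. when $\mathfrak{m}$ is abelian — which is precisely the split case ($\mathfrak{m}=0$) and the complex case ($\mathfrak{m}$ a maximal torus of the compact form). In these cases the already-isotropic leaf through $X$ is Lagrangian, and since both $\mathcal{N}$ and $\omega$ are $G$-invariant, every leaf $Ad_g(N\cdot X)$ is Lagrangian.

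For the fibration, the key observation is that the leaf through $X$ is the orbit $N\cdot X$, while $G_X=MA\subseteq P$ and $MA$ normalizes $N$, so that $P=N\rtimes MA$ is exactly the setwise stabilizer of this leaf. Thus the leaf space is realized by the projection $\Omega=G/MA\to G/P$, $gMA\mapsto gP$, whose fiber over $gP$ is $gP/MA=\{Ad_{gn}X:n\in N\}=Ad_g(N\cdot X)$, the leaf through $Ad_gX$. Because $N$ is simply connected nilpotent and $N\cap MA=\{e\}$, each fiber is diffeomorphic to $N\cong\mathbb{R}^{n}$. The Iwasawa decomposition $G=KP$ with $K\cap P=M$ identifies $G/P\cong K/M$, and the map $kM\mapsto Ad_kX$ realizes this quotient as $Ad_KX\subseteq\Omega$; composing with the projection shows $Ad_KX$ meets each leaf in exactly one point, so it is a global section. $G$-invariance of $\mathcal{N}$ then makes this a left $G$-invariant $\mathbb{R}^n$-fibration over $Ad_KX$.

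It remains to prove that $Ad_KX$ is a transverse Lagrangian. Transversality at $X$ is the splitting $T_X\Omega=[\mathfrak{n},X]\oplus[\mathfrak{k},X]=\mathcal{N}_X\oplus T_X(Ad_KX)$: the sum equals $[\mathfrak{g},X]=T_X\Omega$ because $\mathfrak{k}+\mathfrak{a}+\mathfrak{n}=\mathfrak{g}$ and $[\mathfrak{a},X]=0$, and it is direct by the dimension count above; $K$-equivariance then propagates transversality along all of $Ad_KX$. Since $\dim Ad_KX=\dim\mathfrak{k}-\dim\mathfrak{m}=\dim\mathfrak{n}=\tfrac12\dim\Omega$, it suffices to establish isotropy, and here is the crucial computation: for $\xi,\eta\in\mathfrak{k}$ one has $\omega_X([\xi,X],[\eta,X])=B(X,[\xi,\eta])$, and $[\xi,\eta]\in\mathfrak{k}$ while $X\in\mathfrak{p}$, so this vanishes because $B(\mathfrak{k},\mathfrak{p})=0$; invariance of $\omega$ under the compact group $K$ then carries isotropy from $X$ to every point of the orbit.

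The step I expect to be the main obstacle is this last isotropy computation, since it requires $X\in\mathfrak{p}$. In the split case this is automatic, as $\mathfrak{h}_0=\mathfrak{a}\subseteq\mathfrak{p}$, so any regular representative already lies in $\mathfrak{p}$. In the complex case $\mathfrak{h}_0=\mathfrak{t}\oplus\mathfrak{a}$ with $\mathfrak{t}\neq0$, and the argument forces the representative to lie in $\mathfrak{a}=\mathfrak{h}_0\cap\mathfrak{p}$; one must check that the orbit under consideration admits such a representative and that it remains regular, the point being that for restricted-regular $X\in\mathfrak{a}$ the centralizer is $\mathfrak{m}\oplus\mathfrak{a}=\mathfrak{h}_0$. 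Verifying that the compact part of $X$ can be taken to vanish — equivalently, that the relevant orbits meet $\mathfrak{p}$ — is the delicate point on which the identification of $Ad_KX$ as a Lagrangian section rests, whereas the Lagrangian property of the fibration itself follows already from the dimension count and Theorem \ref{thm:foliation}.
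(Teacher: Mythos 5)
Your core computations coincide with the paper's: the leaves are Lagrangian by a dimension count, the fibration comes from the global Iwasawa decomposition, and isotropy of $Ad_{K}X$ comes from $\omega_{X}([\xi,X],[\eta,X])=B(X,[\xi,\eta])$ together with $B(\mathfrak{k},\mathfrak{p})=0$. Two of your refinements are genuine improvements. First, the paper proves the fibration claim by writing $g=kna$, so that every point of the orbit is $Ad_{k}Ad_{n}X$, and then simply asserts that each leaf meets $Ad_{K}X$ in a unique point; your identification of the leaf space as $G/P\cong K/M$ via the minimal parabolic $P=MAN$, with $G_{X}=MA$ and $K\cap P=M$, makes both existence and uniqueness of that intersection point transparent. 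Second, your count $\dim\Omega=2\dim\mathfrak{n}+(\dim\mathfrak{m}-\dim\mathfrak{t})$, with the Lagrangian condition $\mathfrak{m}=\mathfrak{t}$, is correct in both cases, whereas the paper asserts $Z_{\mathfrak{k}}(\mathfrak{a})=\{0\}$ for split \emph{and} complex $\mathfrak{g}$; this is false in the complex case, where $\mathfrak{m}=Z_{\mathfrak{k}}(\mathfrak{a})$ is a maximal torus of the compact real form $\mathfrak{k}$ and $\dim\mathfrak{n}=\dim\mathfrak{k}-\dim\mathfrak{t}$, not $\dim\mathfrak{k}$.

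The step you flagged as the delicate point is a genuine gap, and you should know it cannot be closed in the stated generality: in the complex case a regular orbit need not meet $\mathfrak{p}$ at all. Take $\mathfrak{g}=\mathfrak{sl}(2,\mathbb{C})$ and $X=\operatorname{diag}(i,-i)$. Then $X$ is regular and $\mathfrak{g}_{X}=\mathfrak{h}_{0}$ is the $\theta$-stable (maximally noncompact, since all CSAs of a complex algebra are conjugate) diagonal CSA, so $\Omega=Ad_{G}X$ is a type-$0$ orbit covered by the theorem; but $\mathfrak{p}$ consists of traceless Hermitian matrices, whose eigenvalues are real, while every point of $\Omega$ has eigenvalues $\pm i$, so $\Omega\cap\mathfrak{p}=\emptyset$ and no representative of the kind you need exists. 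Indeed here $X\in\mathfrak{k}=\mathfrak{su}(2)$, the submanifold $Ad_{K}X=Ad_{SU(2)}X$ is a $2$-sphere, and $\omega$ restricts on it to the nondegenerate Kirillov form of that $SU(2)$-orbit: it is symplectic, not Lagrangian. (The Lagrangian \emph{fibration} part of the statement survives, since the dimension count does not involve $X$; only the transverse-Lagrangian claim fails.) The paper's own proof has the same gap, hidden behind the false claim $Z_{\mathfrak{k}}(\mathfrak{a})=\{0\}$, from which it deduces $\mathfrak{g}_{X}\subset\mathfrak{p}$ and hence $X\in\mathfrak{p}$. So your proof is complete for split real forms, where $\mathfrak{h}_{0}=\mathfrak{a}\subset\mathfrak{p}$ automatically; in the complex case, your argument --- and the theorem itself --- is correct exactly under the extra hypothesis you isolated, namely that the orbit admits a representative $X\in\mathfrak{a}$ (a hyperbolic orbit); for such $X$ the centralizer is $\mathfrak{m}\oplus\mathfrak{a}=\mathfrak{h}_{0}$, regularity amounts to the nonvanishing of the restricted roots at $X$, and the rest of your argument goes through verbatim.
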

\setcounter{section}{1}\setcounter{theorem}{0}

For type-$\mathbf{F}$ orbits with $|\mathbf{F}|>0$, we can show the existence of left-invariant isotropic foliations whose tangent space at $X$ is a subspace of $\mathfrak{n}$. Indeed, using the theory of Cayley transforms, we give in Theorem \ref{thm:general case} a procedure by which one can use the foliation of Theorem \ref{thm:foliation} above to induce left-invariant isotropic foliations on type-$\mathbf{F}$ orbits for any $\mathbf{F}$; because of the technical nature of the result, we leave the precise statement to Section \ref{subsec:nonmaxNC}.

The subspace of $\mathfrak{n}$ which is tangent to the isotropic left-invariant foliation might be (and is, in some cases) $\{0\}$, which implies the associated foliation is trivial. On the other hand, if the dimension of $G$ is not too small, we have the following result, which for the sake of readability in the introduction is a simplification of Theorem \ref{thm:h1iso}. See the remarks following Theorem \ref{thm:h1iso} for the precise meaning of ``too small.''

\setcounter{section}3\setcounter{theorem}{6}
\begin{theorem}
If $\Omega$ is type $\{\alpha\}$, with $\alpha$ simple, and the dimension of $G$ is not too small, then $\Omega$ admits a left-invariant isotropic foliation $\mathcal{N}$ with $\{0\}\neq\mathcal{N}_{X}\varsubsetneq\mathfrak{n}$.
\end{theorem}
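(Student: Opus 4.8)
The plan is to produce the foliation explicitly as the orbit foliation of a proper nilpotent subalgebra of $\mathfrak n$, and then to isolate the combinatorial hypothesis on $\mathfrak g$ that makes it simultaneously nonzero and proper. Concretely, let $\Delta^{+}$ be the positive system giving $\mathfrak n=\bigoplus_{\beta\in\Delta^{+}}\mathfrak g_{\beta}$, let $\alpha\in\Delta^{+}$ be the simple root determining the type, and set $\mathfrak n_{1}=\bigoplus_{\beta\in\Delta^{+}\setminus\{\alpha\}}\mathfrak g_{\beta}$, the candidate for the subalgebra whose orbits are the leaves of $\mathcal N$. By Theorem~\ref{lemma:hj-decomp}, after conjugating so that the stabilizer $\mathfrak h$ is the Cayley transform $c_{\alpha}(\mathfrak h_{0})$, a regular $X\in\mathfrak h$ may be written $X=X_{0}+sY_{\alpha}$, where $X_{0}\in\mathfrak h_{0}\cap\ker\alpha$ and $Y_{\alpha}$ spans the compact direction produced by the transform, so that $Y_{\alpha}\in\mathfrak g_{\alpha}\oplus\mathfrak g_{-\alpha}$ with $s\neq0$. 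The essential new feature compared to the type-$0$ case of Theorem~\ref{thm:foliation} is that $X$ now has a nonzero $\mathfrak g_{-\alpha}$-component, and this is exactly what forces us to pass from $\mathfrak n$ to the proper subalgebra $\mathfrak n_{1}$.

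First I would check that $\mathfrak n_{1}$ is a subalgebra and that $\mathcal N_{X}:=[\mathfrak n_{1},X]$ lies in $\mathfrak n$; both facts hinge on $\alpha$ being simple. Since $\alpha$ cannot be written as a sum of two positive roots, $[\mathfrak g_{\beta},\mathfrak g_{\gamma}]\subseteq\mathfrak g_{\beta+\gamma}$ with $\beta,\gamma\in\Delta^{+}\setminus\{\alpha\}$ never meets $\mathfrak g_{\alpha}$, so $[\mathfrak n_{1},\mathfrak n_{1}]\subseteq\mathfrak n_{1}$. For the tangent space, take $\nu\in\mathfrak g_{\beta}\subseteq\mathfrak n_{1}$; then $[\nu,X_{0}]\in\mathfrak g_{\beta}$, $[\nu,\mathfrak g_{\alpha}]\subseteq\mathfrak g_{\beta+\alpha}$, and $[\nu,\mathfrak g_{-\alpha}]\subseteq\mathfrak g_{\beta-\alpha}$. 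The first two summands stay in $\mathfrak n$; for the third, simplicity of $\alpha$ again prevents $\alpha-\beta$ from being a positive root when $\beta\neq\alpha$, so $\beta-\alpha$ is either a positive root or not a root at all, and $\mathfrak g_{\beta-\alpha}\subseteq\mathfrak n$. It is precisely to exclude the one bad case $\beta=\alpha$, where $[\mathfrak g_{\alpha},\mathfrak g_{-\alpha}]$ would escape into the Cartan, that $\mathfrak g_{\alpha}$ is dropped. Hence $\mathcal N_{X}=[\mathfrak n_{1},X]\subseteq\mathfrak n$.

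Isotropy is then a short computation with the Killing form $B$, using $\omega_{X}([\xi,X],[\eta,X])=B(X,[\xi,\eta])$. For $\nu_{1},\nu_{2}\in\mathfrak n_{1}$ the bracket $[\nu_{1},\nu_{2}]\in[\mathfrak n,\mathfrak n]\subseteq\mathfrak n$ has no $\mathfrak g_{\alpha}$-component (again by simplicity of $\alpha$) and no $\mathfrak g_{-\alpha}$-component (it is a sum of positive root spaces). Writing $X=X_{0}+sY_{\alpha}$, the term $B(X_{0},[\nu_{1},\nu_{2}])$ vanishes because $X_{0}\in\mathfrak h_{0}\perp\mathfrak n$, while $B(Y_{\alpha},[\nu_{1},\nu_{2}])$ vanishes because the $\mathfrak g_{\pm\alpha}$-parts of $Y_{\alpha}$ pair under $B$ only with the $\mathfrak g_{\mp\alpha}$-components of $[\nu_{1},\nu_{2}]$, both of which are zero. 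Thus $\mathcal N_{X}$ is isotropic, and since $G$ acts on $\Omega$ by symplectomorphisms the left-invariant distribution generated by $\mathcal N_{X}$ is isotropic everywhere; its integrability and left-invariance follow from $\mathfrak n_{1}$ being a subalgebra, exactly as in the general construction behind Theorem~\ref{thm:general case}.

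It remains to control $\dim\mathcal N_{X}$, which is where the hypothesis enters. Because $\mathfrak h=(\mathfrak h_{0}\cap\ker\alpha)\oplus\mathbb R Y_{\alpha}$ meets $\mathfrak n_{1}$ only in $\{0\}$, the map $\operatorname{ad}_{X}$ (whose kernel is $\mathfrak h$, as $X$ is regular) is injective on $\mathfrak n_{1}$, so $\dim\mathcal N_{X}=\dim\mathfrak n_{1}=\dim\mathfrak n-\dim\mathfrak g_{\alpha}$. This is strictly less than $\dim\mathfrak n$ since $\mathfrak g_{\alpha}\neq\{0\}$, giving $\mathcal N_{X}\varsubsetneq\mathfrak n$; and it is nonzero precisely when there is a positive root other than $\alpha$, i.e. when $\mathfrak n\neq\mathfrak g_{\alpha}$. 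The latter fails only for the smallest algebras (the $\mathfrak{sl}(2,\mathbb R)$-type situation, where $\mathfrak n$ is a single simple root space), and this is the content of the ``not too small'' hypothesis, which I would make precise as in Theorem~\ref{thm:h1iso} by requiring $\Delta^{+}\setminus\{\alpha\}\neq\emptyset$. The main obstacle I anticipate is not any single step but the bookkeeping across the Cayley transform: one must be certain that the $\mathfrak h_{0}$-root-space description of $X$ from Theorem~\ref{lemma:hj-decomp} is the one used throughout, so that the single removed root space $\mathfrak g_{\alpha}$ is simultaneously responsible for the escape into the Cartan, for the only potential failure of isotropy, and for the codimension count. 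This coincidence is what makes the simple-root case clean and is special to $|\mathbf F|=1$.
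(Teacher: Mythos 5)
In the reduced case your construction is sound and is essentially the paper's (Theorem \ref{thm:h1iso}): you remove the root space of the simple root attached to the type, and your three verifications (subalgebra, stabilizer-stability, isotropy) are exactly conditions (3), (1) and (2) of Theorem \ref{thm:general case}. The genuine gap is that you index $\mathfrak{n}$ by $\Delta^{+}$ and therefore implicitly assume the relevant root system is reduced. The nilpotent algebra of the Iwasawa decomposition is $\mathfrak{n}=\bigoplus_{\mu\in\Sigma^{+}}\mathfrak{g}_{\mu}$, a sum of \emph{restricted} root spaces, and $\Sigma$ is non-reduced (type $(BC)$) for many of the algebras carrying type-$\{\alpha\}$ orbits: $\mathfrak{su}(p,q)$ with $p<q$, $\mathfrak{sp}(p,q)$ with $p<q$, $\mathfrak{so}^{*}(2n)$ with $n$ odd, $E\text{\textit{III}}$, $F\text{\textit{II}}$. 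When $\beta=\left.\alpha\right\vert_{\mathfrak{a}}$ and $2\beta$ are both restricted roots, dropping only $\mathfrak{g}_{\beta}$ fails: for $0\neq V\in\mathfrak{g}_{2\beta}\subseteq\mathfrak{n}_{1}$, the $\mathfrak{g}_{\beta}$-component of $[E_{\alpha}+\theta E_{\alpha},V]$ is $[\theta E_{\alpha},V]$, which is nonzero in general and lies neither in $\mathfrak{n}_{1}$ nor in $\mathfrak{n}_{1}\oplus\mathfrak{h}_{\{\alpha\}}$ (comparing $\mathfrak{g}_{-\beta}$-components kills any admissible multiple of $E_{\alpha}+\theta E_{\alpha}$). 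So your $\mathfrak{n}_{1}$ is not stable under the stabilizer, and the ``left-invariant distribution generated by $\mathcal{N}_{X}$'' is not well-defined: two group elements carrying $X$ to the same point translate $\mathcal{N}_{X}$ to different subspaces. This is exactly why Theorem \ref{thm:h1iso} sets $\Sigma_{1}=\Sigma^{+}\setminus\{\beta,2\beta\}$ in the non-reduced case; note also that your stability check only concludes $[\mathfrak{n}_{1},X]\subseteq\mathfrak{n}$, whereas what is needed (and what your computation does yield when $\Sigma$ is reduced) is $[\mathfrak{h}_{\{\alpha\}},\mathfrak{n}_{1}]\subseteq\mathfrak{n}_{1}$.

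The omission changes the answer, not merely the proof. For $F\text{\textit{II}}$ or $\mathfrak{su}(1,q)$, $q\geq2$, the restricted root system is $(BC)_{1}$, so $\Sigma^{+}=\{\beta,2\beta\}$: your recipe outputs $\mathcal{N}_{X}=\mathfrak{g}_{2\beta}\neq\{0\}$ and asserts a nontrivial isotropic foliation, whereas the correct subalgebra is $\mathfrak{n}_{1}=\{0\}$; these orbits are precisely ones where the ``not too small'' hypothesis fails. Correspondingly, your quantitative reading of the hypothesis ($\Delta^{+}\setminus\{\alpha\}\neq\emptyset$) is the right threshold only in the reduced case; the correct condition is $|\Sigma^{+}|>1$ when $\Sigma$ is reduced and $|\Sigma^{+}|>2$ when it is not, as in the remarks following Theorem \ref{thm:h1iso}.
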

\setcounter{section}1\setcounter{theorem}{0}

Our final result ist that the induced foliations are (generalized) rulings.

\setcounter{section}{4}\setcounter{theorem}{0}
\begin{theorem}
Let $\Omega$ be a coadjoint orbit in $\mathfrak{g}$ through a regular point $X$. The leaves of the left-invariant isotropic foliation of $\Omega$ of Theorem \ref{thm:general case} are affine subspaces of $\mathfrak{g}$.
\end{theorem}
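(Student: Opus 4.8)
The plan is to reduce the statement to the single leaf through the base point $X$ and then to identify that leaf explicitly with an affine subspace of $\mathfrak{g}$. First I would isolate the mechanism behind left-invariance. By construction the foliation of Theorem~\ref{thm:general case} is obtained by transporting the distribution $\mathcal{N}_X\subseteq T_X\Omega$ around $\Omega$ by the (left) $G$-action, so the leaf through an arbitrary point $Y=\mathrm{Ad}_gX$ is exactly $\mathrm{Ad}_g(L_X)$, where $L_X$ denotes the leaf through $X$. Since $\mathrm{Ad}_g\colon\mathfrak{g}\to\mathfrak{g}$ is a linear isomorphism, it carries affine subspaces to affine subspaces; hence it suffices to prove that $L_X$ itself is an affine subspace.

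Next I would describe $L_X$ as a group orbit. The leaf through $X$ is $N'\cdot X=\{\mathrm{Ad}_{\exp\xi}X:\xi\in\mathfrak{n}'\}$, the orbit of the connected nilpotent subgroup $N'=\exp\mathfrak{n}'$ whose Lie algebra $\mathfrak{n}'=\mathcal{N}_X$ is the subalgebra of $\mathfrak{n}$ furnished by the construction, and which (as one records from that construction, generalizing the type-$\mathbf{0}$ identity $[\mathfrak{n},X]=\mathfrak{n}$) is $\mathrm{ad}_X$-invariant with $[\mathfrak{n}',X]=\mathfrak{n}'$ and $[\mathfrak{n}',\mathfrak{n}']\subseteq\mathfrak{n}'$. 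Because every element of $\mathfrak{n}$ is $\mathrm{ad}$-nilpotent, $\mathrm{ad}_\xi$ is a nilpotent operator on $\mathfrak{g}$ for each $\xi\in\mathfrak{n}'$, so
$$\mathrm{Ad}_{\exp\xi}X=e^{\mathrm{ad}_\xi}X=X+\mathrm{ad}_\xi\Big(\tfrac{e^{\mathrm{ad}_\xi}-1}{\mathrm{ad}_\xi}X\Big)$$
is a finite polynomial expression. Its leading bracket term lies in $[\mathfrak{n}',X]=\mathfrak{n}'$ and all higher terms lie in $[\mathfrak{n}',\mathfrak{n}']\subseteq\mathfrak{n}'$, whence $\mathrm{Ad}_{\exp\xi}X\in X+\mathfrak{n}'$. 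Thus $L_X\subseteq X+\mathcal{N}_X$, an affine subspace of dimension $\dim\mathfrak{n}'$.

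Finally I would upgrade this containment to an equality. The construction guarantees $\mathfrak{n}'\cap\mathfrak{z}_{\mathfrak{g}}(X)=\mathfrak{n}'\cap\mathfrak{h}=0$, so the stabilizer of $X$ in the simply connected nilpotent group $N'$ is trivial and $N'\to L_X$ is a bijection onto an embedded submanifold of dimension $\dim\mathfrak{n}'=\dim(X+\mathcal{N}_X)$; being contained in the equidimensional affine subspace $X+\mathcal{N}_X$, the leaf $L_X$ is open in it. I expect the main obstacle to be showing that $L_X$ is moreover \emph{all} of $X+\mathcal{N}_X$ rather than merely an open piece of it: here I would invoke the theorem of Kostant and Rosenlicht that an orbit of a unipotent group acting algebraically is closed, which applies since $N'$ is unipotent and acts on the affine space $\mathfrak{g}$ polynomially via $\mathrm{Ad}$. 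Openness, closedness and connectedness of $L_X$ then force $L_X=X+\mathcal{N}_X$, and applying $\mathrm{Ad}_g$ as in the first step yields that every leaf is an affine subspace of $\mathfrak{g}$.
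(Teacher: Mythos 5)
Your proposal is essentially the paper's own proof: reduce to the leaf through $X$ by linearity of $Ad_{g}$, expand $e^{ad_{\xi}}X$ using the $\mathfrak{h}_{\mathbf{F}}$-stability and subalgebra property of $\mathfrak{n}'$ to get $L_{X}\subseteq X+\mathfrak{n}'$, and use $\mathfrak{n}'\cap\mathfrak{h}=\{0\}$ for the dimension count. The only difference is that you explicitly invoke Kostant--Rosenlicht (closedness of unipotent orbits) to pass from ``open subset of $X+\mathfrak{n}'$ of full dimension'' to equality, a step the paper's proof asserts directly from the dimension count, so your version is, if anything, more complete at that point.
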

\setcounter{section}1\setcounter{theorem}{0}

\section{Setup\label{sec:setup}}

Much of the material in this section is standard; we recall it to set notation
and fix the range of our focus. The more standard material can be found in any
of the excellent texts \cite{Knapp}, \cite{Helgason}, \cite{Varadarajan}. For
less standard material, we include specific references or short proofs where
no reference could be found.

\bigskip

The foliations we want to describe arise from the Iwasawa decomposition of a semisimple Lie algebra (the group level decomposition will also play a role). Since we are interested in the symplectic geometry of coadjoint orbits, we will restrict our attention to coadjoint orbits in real semisimple Lie algebras, though in some cases the underlying algebra has a complex structure. Every semisimple Lie algebra is a direct sum of simple Lie algebras, so we further restrict our attention to simple real Lie algebras. These come in two types: simple complex Lie algebras regarded as real algebras, and noncomplex simple real Lie algebras. If $\mathfrak{g}$ is a noncomplex simple real Lie algebra, then $\mathfrak{g}^{\mathbb{C}}$ is a simple complex Lie algebra.

Our construction arises from the Iwasawa decomposition, which in the compact case is trivial in such a way that our construction yields only the trivial foliation by points.\footnote{The Iwasawa decomposition of a compact Lie group $G$ is $G=GAN$ where $A=N=\{e\}.$ The foliations we construct are induced from $N$, and hence are trivial for compact groups.} Hence, we will consider only
\begin{t_enumerate}
\item simple complex Lie algebras, regarded as real Lie algebras (which are never compact), and
\item noncompact noncomplex simple real Lie algebras.
\end{t_enumerate}

The simple complex Lie algebras are the four classical series of types $A_{n},~B_{n},~C_{n},$~$D_{n},$ and the exceptional algebras of types $E_{6},~E_{7},~E_{8},~F_{2},~F_{4},$ and $G_{2}$. The list of noncompact noncomplex simple real Lie algebras is longer; see Figure \ref{fig:LAdata} in the Appendix.

Let $G$ be a simple Lie group with Lie algebra $\mathfrak{g}$. Throughout, we will denote by $\mathfrak{g}$ a simple real Lie algebra, $\mathfrak{g}^{\mathbb{C}}$ its complexification, and $\mathfrak{s}$ a simple complex Lie algebra. The Killing form $B$ on $\mathfrak{g}$ is nondegenerate and hence induces an isomorphism $\mathfrak{g}^{\ast}\simeq\mathfrak{g}$, by which we identify $\mathfrak{g}^{\ast}$ with $\mathfrak{g.}$

Fix $\xi\in\mathfrak{g}^{\ast}$ and consider the coadjoint orbit $W:=Ad_{G}^{\ast}\xi$. Denote the stabilizer group of $\xi$ by $G_{\xi}:=\{g\in G:Ad_{g}^{\ast}\xi=\xi\},$ which has Lie algebra
\[
\mathfrak{g}_{\xi}=\{Z\in\mathfrak{g}:ad_{Z}^{\ast}\xi=0\}.
\]

The tangent space to $W$ at the point $\xi$ is the set $\{ad_{X}^{\ast}\xi:X\in\mathfrak{g}\}$, and the Kirillov symplectic form on $W$ is determined by left invariance and it's value at $\xi$, which is
\[
\omega_{\xi}(ad_{X}^{\ast}\xi,ad_{Y}^{\ast}\xi):=\xi([X,Y]).
\]

By Cartan's criterion, the Killing form $B(X,Y)=tr(ad_{X}\,ad_{Y})$ is nondegenerate if $\mathfrak{g}$ is semisimple, and so we can use the Killing form $B$ to transfer the coadjoint orbit $W$ and its symplectic structure to $\mathfrak{g.}$ Define
\[
\Omega:=B^{-1}(W)=Ad_{G}X.
\]
Under this identification, the stabilizer algebra of $X$ is $B$-orthogonal to the tangent space of $\Omega$ at $X$.

\bigskip
We now turn to regular orbits. For $X\in\mathfrak{g}$, consider the characteristic polynomial 
\begin{equation}
\det(\lambda\mathbf{1}-ad_{X})=\lambda^{n}+\sum_{j=0}^{n-1}p_{j}(X)\lambda^{j}. \label{eqn:charpoly}
\end{equation}
Recall that $X$ is said to be \emph{regular} if $p_{r}(X)\neq0$, where $r$ is the rank of $G$. Regular points are thus generic in $\mathfrak{g}.$ An orbit $\Omega=Ad_{G}X$ is \emph{regular} if $X$ is regular.

The polynomials $p_{j}$ are invariant under any automorphism of $\mathfrak{g}$ \cite[Sec. 3.9]{Varadarajan}, in particular, $p_{r}(Ad_{g}X)=p_{r}(X)$. This means $Ad_{g}X$ is regular if and only if $X$ is regular. Moreover, if $\det_{\mathfrak{g}^{\mathbb{C}}}(\lambda\mathbf{1}-ad_{\mathfrak{g}^{\mathbb{C}}}X)=\lambda^{n}+\sum_{j=0}^{n-1}p_{j}^{\prime}(X)\lambda^{j}$, then $p_{j}=\left.  p_{j}^{\prime}\right\vert _{\mathfrak{g}}$, which shows that $X\in\mathfrak{g}$ is regular if and only if $X=X+0i\in\mathfrak{g}^{\mathbb{C}}$ is regular.

For $X$ regular, the generalized $0$-eigenspace
\begin{equation}
\mathfrak{g}_{X}:=\{Y\in\mathfrak{g}:ad_{X}^{s}Y=0\text{ for some }s\geq1\}\label{eqn:gend0espace}%
\end{equation}
is called a Cartan subalgebra, which we abbreviate by \emph{CSA}. If $\mathfrak{g}$ is a semisimple Lie algebra, then one may take $s=1$, that is, $\mathfrak{g}_{X}=\{Y\in\mathfrak{g}:[X,Y]=0\}$ corresponds to the stabilizer algebra of $X$. Since we will only be concerned with stabilizers of regular points in semisimple Lie algebras, we have denoted both by the same symbol $\mathfrak{g}_{X}.$

If $\mathfrak{g}$ is a complex simple Lie algebra, then $\mathfrak{g}_{X}$ is a CSA if and only if it is a maximal abelian subalgebra such that $\{ad_{Y}:Y\in\mathfrak{g}_{X}\}$ is simultaneously diagonalizable. If $\mathfrak{g}$ is a real semisimple Lie algebra, then $\mathfrak{g}_{X}$ is a CSA if and only if the complexification $\mathfrak{g}_{X}^{\mathbb{C}}$ is a CSA in $\mathfrak{g}^{\mathbb{C}}.$ All CSAs have the same dimension, which is the rank $r$ of $G$.

From the fact that $\mathfrak{g}_{X}$ is a CSA for $X$ regular and the $Ad$-invariance of regularity discussed above, we make the following observation which is basic to our construction.

\begin{lemma}
\label{lemma:stabCSA}If $\Omega$ is a regular orbit, then for each $Y\in\Omega$, $\mathfrak{g}_{Y}$ is a Cartan subalgebra.
\end{lemma}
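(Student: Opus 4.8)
The plan is to deduce the statement directly from the two facts just recorded: that the stabilizer algebra of a regular point is a Cartan subalgebra, and that regularity is invariant under $Ad_g$ for every $g\in G$. Since $\Omega=Ad_{G}X$, any $Y\in\Omega$ can be written as $Y=Ad_{g}X$ for some $g\in G$. First I would invoke the $Ad$-invariance of the characteristic coefficient $p_{r}$, namely $p_{r}(Ad_{g}X)=p_{r}(X)\neq0$, to conclude that $Y$ is again a regular point of $\mathfrak{g}$.

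Once $Y$ is known to be regular, the characterization recalled above for $X$ applies verbatim to $Y$: the generalized $0$-eigenspace $\mathfrak{g}_{Y}$ — which, in the semisimple setting, coincides with the centralizer $\{Z\in\mathfrak{g}:[Y,Z]=0\}$ — is a Cartan subalgebra. This already closes the argument.

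A more transparent variant, which I would include as a remark, is to observe that $\mathfrak{g}_{Y}=Ad_{g}\,\mathfrak{g}_{X}$. Indeed, since $Ad_{g}$ is a Lie algebra automorphism, $Ad_{g}[X,W]=[Ad_{g}X,Ad_{g}W]$, so taking $W=Ad_{g^{-1}}Z$ gives $[Y,Z]=0$ if and only if $[X,Ad_{g^{-1}}Z]=0$; hence $Z\in\mathfrak{g}_{Y}$ precisely when $Ad_{g^{-1}}Z\in\mathfrak{g}_{X}$. As automorphisms carry Cartan subalgebras to Cartan subalgebras, the image $Ad_{g}\,\mathfrak{g}_{X}$ of the Cartan subalgebra $\mathfrak{g}_{X}$ is again a Cartan subalgebra.

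I do not anticipate any genuine obstacle here: the lemma is an immediate corollary of the $Ad$-invariance of regularity together with the identification of stabilizers of regular points as Cartan subalgebras, both established in the preceding discussion. The only point requiring (routine) care is the short bracket computation establishing $\mathfrak{g}_{Y}=Ad_{g}\,\mathfrak{g}_{X}$, which is nothing more than unwinding the automorphism property of $Ad_{g}$.
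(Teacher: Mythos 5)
Your proposal is correct and follows exactly the paper's (implicit) argument: the lemma is stated there as an immediate consequence of the $Ad$-invariance of the coefficient $p_{r}$ (hence of regularity) together with the fact that the stabilizer $\mathfrak{g}_{Y}$ of a regular point $Y$ is a Cartan subalgebra. Your supplementary remark that $\mathfrak{g}_{Y}=Ad_{g}\,\mathfrak{g}_{X}$ and that automorphisms preserve Cartan subalgebras is a harmless (and correct) alternative phrasing of the same fact.
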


\subsection{The Iwasawa decomposition.\label{sec:CSAs+roots}}

Let $\mathfrak{s}$ be a complex simple Lie algebra. For a CSA $\mathfrak{s}_{X}<\mathfrak{s}$, denote the associated set of roots by $\Delta=\Delta(\mathfrak{s},\mathfrak{s}_{X})$, a choice of positive roots by $\Delta^{+}$, and a choice of simple roots by $\Pi$. For each nonzero root $\alpha\in\Delta$, denote the $B$-dual vector by $H_{\alpha}\in\mathfrak{s}_{X}$.

For $H\in\mathfrak{s}_{X}$, the polynomial $p_{r}$ appearing in (\ref{eqn:charpoly}) can be written in terms of the roots as $p_{r}(H)=\Pi_{\alpha\in\Delta}\alpha(H)$, so that a vector $H\in\mathfrak{h}$ is regular if and only if $\alpha(H)\neq0$ for each root $\alpha$. In particular, the \textit{singular} vectors in $\mathfrak{h}$ (i.e. vectors which are not regular) form a codimension-$1$ subvariety consisting of the union of the kernels of the roots.

\bigskip
For now, we assume $\mathfrak{g}$ is a real semisimple Lie algebra (not necessarily simple), and fix a Cartan involution $\theta$. Let $\mathfrak{g}=\mathfrak{k}\oplus\mathfrak{p}$ be the corresponding Cartan decomposition into $\pm1$ $\theta$-eigenspaces, where the $+1$-eigenspace $\mathfrak{k}$ is a maximal compact subalgebra of $\mathfrak{g}$. Denote by $K$ the corresponding analytic subgroup of $G$ with Lie algebra $\mathfrak{k}$.

If $\mathfrak{h}$ is a $\theta$-stable CSA, then
\[
\mathfrak{h}=(\mathfrak{h}\cap\mathfrak{k})\oplus(\mathfrak{h}\cap\mathfrak{p}).
\]
For a $\theta$-stable CSA $\mathfrak{h}$, the subspace $\mathfrak{h}^{+}:=\mathfrak{h}\cap\mathfrak{k}=\{X\in\mathfrak{h}:ad_{X}$ has purely imaginary eigenvalues$\}$ is called the \emph{toroidal part}, and $\mathfrak{h}^{-}:=\mathfrak{h}\cap\mathfrak{p}=\{X\in\mathfrak{h}:ad_{X}$ has real eigenvalues$\}$ is called the \emph{vector part.}

Every CSA is conjugate to a $\theta$-stable CSA. For a $\theta$-stable CSA, the dimension of $\mathfrak{h}^{+}$ is called the \textit{compact dimension}, which we will denote by $\mathfrak{k}$-$\dim\mathfrak{h}$, and the dimension of $\mathfrak{h}^{-}$ is called the noncompact dimension, which we will denote by $\mathfrak{p}$-$\dim\mathfrak{h}$. All maximally compact CSAs are conjugate, as are all maximally noncompact CSAs. For nonmaximal CSAs, the conjugacy classes can be enumerated by conjugacy classes of admissible root systems (see Theorem \ref{thm:Sugiura} below).

\bigskip
Fix a maximal abelian subalgebra $\mathfrak{a}\subset\mathfrak{p}$, and denote the associated system of restricted roots by $\Sigma$. The dimension of $\mathfrak{a}$ is called the \textit{real rank}, denoted $\mathbb{R}$-$\operatorname*{rank}\mathfrak{g}$, and is independent of the choice of $\mathfrak{a}$. Choose a set of positive restricted roots $\Sigma^{+},$ and also a set of simple restricted roots $\Sigma_{0}^{+}$.

Let the centralizer of $\mathfrak{a}$ in $\mathfrak{k}$ be $\mathfrak{m}=Z_{\mathfrak{k}}(\mathfrak{a}),$ and choose a maximal abelian subspace $\mathfrak{t}\subset\mathfrak{m}$. Then
\[
\mathfrak{h}_{0}:=\mathfrak{t}\oplus\mathfrak{a}
\]
is a maximally noncompact CSA of $\mathfrak{g}$. The roots of $\mathfrak{g}^{\mathbb{C}}$ with respect to $\mathfrak{h}_{0}^{\mathbb{C}},$ when restricted to $\mathfrak{a}$, yield the restricted roots. Moreover, one may choose the sets of positive and simple roots so that positive roots restrict to positive restricted roots, simple roots restrict to simple restricted roots, and the restriction map $\alpha\in\Delta,\Delta^{+},\Pi\mapsto\left.\alpha\right\vert _{\mathfrak{a}}\in\Sigma,\Sigma^{+},\Sigma_{0}^{+}$ (resp.) is onto. We will always make such compatible choices.

\bigskip
\noindent\textbf{Remark.} If $\mathfrak{g}$ is a complex simple Lie algebra, regarded as a real Lie algebra, then the restricted root system of $\mathfrak{g}$ as a real Lie algebra is equal to the roots system of $\mathfrak{g}$ considered as a complex Lie algebra, that is, $\Sigma(\mathfrak{g},\mathfrak{a})=\Delta(\mathfrak{g}^{\mathbb{C}},\mathfrak{a}^{\mathbb{C}})$, and the restricted root spaces have real dimension $2$ \cite[p. 153]{Klimyk-Vilenkin}.

\bigskip
Let $\mathfrak{n}:=\bigoplus_{\lambda\in\Sigma^{+}}\mathfrak{g}_{\lambda}$. The \emph{Iwasawa decomposition} of $\mathfrak{g}$ is the direct sum decomposition
\[
\mathfrak{g=k}\oplus\mathfrak{n}\oplus\mathfrak{a.}
\]
The subspace $\mathfrak{n}$ is a nilpotent subalgebra, and $\mathfrak{n}\oplus\mathfrak{a}$ is a solvable subalgebra such that $[\mathfrak{n}\oplus\mathfrak{a},\mathfrak{n}\oplus\mathfrak{a}]\subset\mathfrak{n}$.

The Iwasawa decomposition of $\mathfrak{g}$ induces a global decomposition $G=KNA$ (that is, group multiplication yields a diffeomorphism $K\times N\times A$ onto $G$), where $N$ and $A$ are simply connected analytic subgroups of $G$ with Lie algebras $\mathfrak{n}$ and $\mathfrak{a}$ (resp.). There is some, but not total, freedom in the order of the analytic subgroups; it is necessary that $N$ and $A$ be taken together. So for instance, $G$ equals $NAK$ or $KAN$, but not in general $AKN$.

\subsection{Structure theory of Cartan subalgebras and Cayley transforms.
\label{sec:cayley}}

In this section, $\mathfrak{g}$ denotes a noncomplex real simple Lie algebra.\footnote{For a complex simple Lie algebra, regarded as a real Lie algebra, there exists a unique conjugacy class of Cartan subalgebras, thus obviating the need for Cayley transforms.} In \cite{Sugiura}, Sugiura describes an invariant of conjugacy classes of Cartan subalgebras. We recall this invariant below. Cayley transforms provide a way to enumerate and move between different conjugacy classes of CSAs, essentially by exploiting the fact that all CSAs are conjugate in the complexification $\mathfrak{g}^{\mathbb{C}}$. Using Cayley transforms, we are able to use Sugiura's invariant to prove Theorem \ref{lemma:hj-decomp}, which elucidates the structure of each conjugacy class of Cartan subalgebras.

\bigskip
Recall that a root $\alpha\in\Delta(\mathfrak{g}^{\mathbb{C}},\mathfrak{h}^{\mathbb{C}})\subset(\mathfrak{h}^{\mathbb{C}})^{\ast}$ is \emph{real} if it takes real values on $\mathfrak{h}$. Since roots take real values on $\mathfrak{h}^{-}$ and $i\mathfrak{h}^{+}$, a root is real if and only if it restricts to $0$ on $\mathfrak{h}^{+}$. In terms of dual vectors, a root $\alpha$ is real if and only if $H_{\alpha}\in\mathfrak{h}^{-}$. Let $H\in\mathfrak{h}^{\mathbb{C}}$. If $\alpha$ is real, then $\alpha(\theta(H_{\left(  \mathfrak{h}^{+}\right)^{\mathbb{C}}}+H_{\left(\mathfrak{h}^{-}\right)  ^{\mathbb{C}}}))=-\alpha(H_{\left(  \mathfrak{h}^{-}\right)  ^{\mathbb{C}}})$ so that $\theta\alpha=-\alpha$.

Two roots $\alpha,\beta$ are said to be \emph{strongly orthogonal} if they are orthogonal and $\alpha\pm\beta$ is not a root.

\bigskip
\noindent\textbf{Remark.} In fact, if $\alpha\neq\pm\beta$, then $\alpha$ is orthogonal to $\beta$ if and only if neither $\alpha+\beta$ nor $\alpha-\beta$ is a root.\cite[Prop. 2.48(f)]{Knapp}

\bigskip
As we will see, our construction requires a sets of strongly orthogonal real roots. Maximal sets of strongly orthogonal roots in complex simple Lie algebras have been classified by Agaoka and Kaneda in \cite{Agaoka-Kaneda}. Given such a set, one could then find subsets consisting of real roots. Another possible approach would be to observe that sets of strongly orthogonal real roots are in some sense dual to the sets of noncompact imaginary roots,\footnote{Using Lemma \ref{lemma:Cayley_props} and similar techniques, applying the Cayley transforms associated to a a maximal sequence of noncompact imaginary roots results in a maximal sequence of strongly orthogonal real roots.} which play a role in the classification of real simple Lie algebras (see \cite[Sec VI.11]{Knapp}, for example). Algorithms are known which generate maximal sequences of noncompact imaginary roots, and we could then use Cayley transforms to convert such a sequence into a a maximal sequence of strongly orthogonal real roots.

We will take a different approach, based on Sugiura's classification of conjugacy classes of CSAs, which we introduce now \cite[Def. 9]{Sugiura}. A set $\mathbf{F}=\{\alpha_{1},\dots,\alpha_{\ell}\}$ of distinct positive roots is called an \emph{admissible root system} if $\alpha_{i}\pm\alpha_{j}\not \in \Delta,~1\leq i,j\leq\ell$.

Two admissible root systems $\mathbf{F}_{1}$ and $\mathbf{F}_{2}$ are said to be equivalent if the dual vectors span the same subspace, that is, if $\bigoplus_{\alpha\in\mathbf{F}_{1}}\mathbb{R}H_{\alpha}\mathbb{=}\bigoplus_{\alpha\in\mathbf{F}_{2}}\mathbb{R}H_{\alpha}$. Two admissible root systems are said to be conjugate if there exists an element $s\in W(\mathfrak{g}^{\mathbb{C}},\mathfrak{h}^{\mathbb{C}})$ of the Weyl group such that $s\mathbf{F}_{1}$ is equivalent to $\mathbf{F}_{2}.$ In \cite{Sugiura}, Sugiura proved the following classification theorem.

\begin{theorem}
\label{thm:Sugiura}\cite[Thm. 6]{Sugiura} There is a one-to-one correspondence between conjugacy classes of CSAs in a real semisimple Lie algebra $\mathfrak{g}$ and conjugacy classes of admissible roots systems which are contained in $\mathfrak{a}$.
\end{theorem}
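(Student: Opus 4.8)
The plan is to realize the correspondence concretely by means of Cayley transforms and then to verify that it descends to a bijection on conjugacy classes. The guiding principle is that any two CSAs become conjugate over the complexification $\mathfrak{g}^{\mathbb{C}}$, so their distinct $G$-conjugacy classes are detected entirely by their position relative to the Cartan involution $\theta$, that is, by the splitting $\mathfrak{h}=\mathfrak{h}^{+}\oplus\mathfrak{h}^{-}$ into toroidal and vector parts. Starting from the maximally noncompact CSA $\mathfrak{h}_{0}=\mathfrak{t}\oplus\mathfrak{a}$, whose vector part $\mathfrak{a}$ is as large as possible, each admissible system $\mathbf{F}$ of strongly orthogonal real roots will be used to trade the subspace $V_{\mathbf{F}}:=\bigoplus_{\alpha\in\mathbf{F}}\mathbb{R}H_{\alpha}\subseteq\mathfrak{a}$ for an equal-dimensional piece of toroidal part.

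First I would set up the forward map. For a real root $\alpha$ (so $H_{\alpha}\in\mathfrak{a}$ and $\theta\alpha=-\alpha$ as noted above), the associated Cayley transform $c_{\alpha}$ is the element of $\mathrm{Int}(\mathfrak{g}^{\mathbb{C}})$ obtained by exponentiating an imaginary multiple of $E_{\alpha}+E_{-\alpha}$; it does not lie in $\mathrm{Ad}(G)$, it fixes $\mathfrak{a}\cap\ker\alpha$ pointwise, and it carries the line $\mathbb{R}H_{\alpha}$ into the toroidal part, so that $c_{\alpha}(\mathfrak{h}_{0})$ is again a $\theta$-stable CSA with one fewer vector dimension. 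Since the roots of $\mathbf{F}$ are strongly orthogonal, the relevant root vectors commute and the transforms $c_{\alpha}$ commute; I set $c_{\mathbf{F}}:=\prod_{\alpha\in\mathbf{F}}c_{\alpha}$ and $\mathfrak{h}_{\mathbf{F}}:=c_{\mathbf{F}}(\mathfrak{h}_{0})$, a $\theta$-stable CSA with vector part $\mathfrak{a}\ominus V_{\mathbf{F}}$. This defines the map $\Phi\colon\mathbf{F}\mapsto[\mathfrak{h}_{\mathbf{F}}]$.

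Next I would prove surjectivity, which simultaneously produces the candidate inverse. Given an arbitrary $\theta$-stable CSA $\mathfrak{h}$, after conjugating by an element of $K$ one may assume $\mathfrak{h}^{-}\subseteq\mathfrak{a}$. I would then show that the orthogonal complement $\mathfrak{a}\ominus\mathfrak{h}^{-}$ is spanned by dual vectors of real roots, and that a maximal strongly orthogonal subset $\mathbf{F}$ of those roots satisfies $c_{\mathbf{F}}(\mathfrak{h}_{0})\sim_{G}\mathfrak{h}$. This shows $\Phi$ is onto and, reading it backwards, defines an invariant $\Psi\colon[\mathfrak{h}]\mapsto[\mathbf{F}]$; well-definedness of $\Psi$ up to equivalence is immediate, since any two maximal strongly orthogonal sets of real roots spanning the same subspace of $\mathfrak{a}$ are equivalent by definition.

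The main obstacle is injectivity, equivalently the completeness of the invariant: I must show that $\mathfrak{h}_{\mathbf{F}_{1}}\sim_{G}\mathfrak{h}_{\mathbf{F}_{2}}$ forces $\mathbf{F}_{1}$ and $\mathbf{F}_{2}$ to be conjugate admissible systems. The difficulty is that a $G$-conjugacy between the two CSAs need neither preserve $\mathfrak{a}$ nor respect the root-space decomposition, so the two real root systems cannot be compared directly. The strategy is to show that, after composition with suitable elements of $K$, any such conjugacy restricts on the vector parts to an element of the little Weyl group $W(\mathfrak{g},\mathfrak{a})=N_{K}(\mathfrak{a})/Z_{K}(\mathfrak{a})$ carrying $\mathfrak{a}\ominus\mathfrak{h}_{\mathbf{F}_{1}}^{-}$ onto $\mathfrak{a}\ominus\mathfrak{h}_{\mathbf{F}_{2}}^{-}$. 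Since $W(\mathfrak{g},\mathfrak{a})$ is exactly the restriction to $\mathfrak{a}$ of the subgroup of $W(\mathfrak{g}^{\mathbb{C}},\mathfrak{h}_{0}^{\mathbb{C}})$ stabilizing the set of real roots, identifying these two spanning subspaces up to the Weyl action is precisely the assertion that $\mathbf{F}_{1}$ and $\mathbf{F}_{2}$ are conjugate. Controlling this reduction to the little Weyl group — in particular the ambiguity coming from $Z_{K}(\mathfrak{a})$ and from conjugacies that mix toroidal and vector directions — is the crux, and is exactly where Sugiura's careful analysis of the invariant does the real work.
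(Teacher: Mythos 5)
This theorem is imported by the paper verbatim from Sugiura (\cite[Thm.~6]{Sugiura}); the paper gives no proof of its own, so there is no in-paper argument to compare against. That said, your outline tracks the standard route, and its forward half coincides with machinery the paper does develop for later use: your $c_{\mathbf{F}}$ is the composite of the transforms $\mathbf{d}_{\alpha}$ of Section \ref{sec:cayley}, the commutation of the $c_{\alpha}$ for strongly orthogonal real roots is Lemma \ref{lemma:Cayley_props}(3)--(4), and the description of $\mathfrak{h}_{\mathbf{F}}$ with vector part $\mathfrak{a}\ominus V_{\mathbf{F}}$ is precisely Theorem \ref{lemma:hj-decomp} together with Sugiura's characterization $(\mathfrak{h}_{\mathbf{F}}\cap\mathfrak{a})^{\perp}\cap\mathfrak{a}=\bigoplus_{\alpha\in\mathbf{F}}\mathbb{R}H_{\alpha}$ quoted after the theorem. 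So the map $\Phi$ is soundly constructed.

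As a proof, however, the proposal has a genuine gap: both directions of the bijection are announced rather than established. For surjectivity you assert without argument that $\mathfrak{a}\ominus\mathfrak{h}^{-}$ is spanned by dual vectors of real roots and that a maximal strongly orthogonal subset $\mathbf{F}$ of them yields $c_{\mathbf{F}}(\mathfrak{h}_{0})\sim_{G}\mathfrak{h}$; the first claim needs an analysis of the reductive centralizer $Z_{\mathfrak{g}}(\mathfrak{h}^{-})$ to produce enough real roots vanishing on $\mathfrak{h}^{-}$, and the second needs an induction showing actual $G$-conjugacy, not merely agreement of compact and noncompact dimensions (which alone do not separate conjugacy classes in general). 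For injectivity --- the completeness of the invariant, which you rightly call the crux --- you identify the correct reduction to the little Weyl group $W(\mathfrak{g},\mathfrak{a})$ but then explicitly defer it to ``Sugiura's careful analysis.'' That reduction requires at least two nontrivial lemmas you do not supply: that two $\theta$-stable CSAs conjugate under $G$ are already conjugate under $K$, and that a $K$-conjugacy matching the vector parts inside $\mathfrak{a}$ can be corrected, modulo $Z_{K}(\mathfrak{h}^{-})$, to an element of $N_{K}(\mathfrak{a})$, hence to a Weyl element stabilizing the real roots. Citing Sugiura is legitimate for the paper, which presents the statement as a quoted result; but a blind proof attempt cannot outsource its decisive step to the theorem's source, so what you have is a correct plan with its central argument missing.
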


Moreover, Sugiura showed that if $\mathbf{F}$ is the admissible root system associated to a conjugacy class of CSAs, then there is a $\theta$-stable CSA $\mathfrak{h}_{\mathbf{F}}$ in the conjugacy class such that
\[
(\mathfrak{h}_{\mathbf{F}}\cap\mathfrak{a)}^{\perp}\cap\mathfrak{a}=\bigoplus_{\alpha\in\mathbf{F}}\mathbb{R}H_{\alpha}.
\]
That is, the conjugacy class of $\mathfrak{h}_{\mathbf{F}}$ is determined by the (roots which span the) complement of the vector part of $\mathfrak{h}_{\mathbf{F}}$ in $\mathfrak{a}$. Since the roots in an admissible root system associated to a CSA are necessarily real, we refer to such a system as a real admissible root system.

\bigskip
We recall now the Cayley transform that we will need for our construction, which takes a $\theta$-stable CSA $\mathfrak{h}$ and a real root $\alpha$ and produces a $\theta$-stable CSA $D_{\alpha}(\mathfrak{h})$ with $\mathfrak{k}$-$\dim D_{\alpha}(\mathfrak{h})=\mathfrak{k}$-$\dim\mathfrak{h}+1$. We loosely follow the description given in \cite[Sec. VI.7]{Knapp}.

If $\alpha$ is a real root, then $\bar{\alpha}=\alpha$ and so we can choose $0\neq X_{\alpha}\in\mathfrak{g}_{\alpha}\cap\mathfrak{g}$ and normalize it to obtain $E_{\alpha}$ such that $B(E_{\alpha},\theta E_{\alpha})=-2/\left\vert\alpha\right\vert ^{2}.$ Define
\[
\mathbf{d}_{\alpha}:=Ad\left(  \exp\left(  \frac{i\pi}{4}(\theta E_{\alpha}-E_{\alpha})\right)  \right)  .
\]
The $D_{\alpha}$-transform of $\mathfrak{h}$ is $D_{\alpha}\mathfrak{h}:=\mathbf{d}_{\alpha}(\mathfrak{h}^{\mathbb{C}})\cap\mathfrak{g}$, and one may compute that $\mathbf{d}_{\alpha}\left(  \frac{2}{\left\vert \alpha\right\vert^{2}}H_{\alpha}\right)  =i(E_{\alpha}+\theta E_{\alpha})$ from which it follows that
\begin{equation}
D_{\alpha}\mathfrak{h}=\ker\left(  \left.  \alpha\right\vert _{\mathfrak{h}}\right)  \oplus\mathbb{R}(E_{\alpha}+\theta E_{\alpha}).\label{eqn:Cayley_CSA}
\end{equation}
$D_{\alpha}\mathfrak{h}$ is clearly $\theta$-stable, and since $E_{\alpha}+\theta E_{\alpha}$ is in $\mathfrak{k}$, we see that $\mathfrak{p}$-$\dim D_{\alpha}\mathfrak{h}=\mathfrak{p}$-$\dim\mathfrak{h}-1$. For each root $\gamma$ with respect to $\mathfrak{h}$, we obtain roots $\mathbf{d}_{\alpha }(\gamma)=\gamma\circ\mathbf{d}_{\alpha}^{-1}$ with respect to $D_{\alpha }\mathfrak{h}$.

A CSA $\mathfrak{h}$ is maximally noncompact if and only if there exist no noncompact imaginary roots, and maximally compact if and only if there exist no real roots.

\bigskip
We can use the real admissible root system associated to a CSA $\mathfrak{h}$ to generate a series of Cayley transforms which transform a $\mathfrak{h}_{0} $ into $\mathfrak{h}$. This process reveals the inner structure of the CSA $\mathfrak{h}$. We need a few more facts regarding real and strongly orthogonal roots, and in particular their behavior under Cayley transforms.

\begin{lemma}
\label{lemma:Cayley_props}Let $\alpha,\beta,\gamma\in\Delta(\mathfrak{g}^{\mathbb{C}},\mathfrak{h}^{\mathbb{C}})$.
\begin{t_enumerate}
\item If $\alpha$ is real and orthogonal to $\beta,$ then $H_{\mathbf{d}_{\alpha}\beta}=H_{\beta}$ and $\left.  \mathbf{d}_{\alpha}\beta\right\vert_{\ker\alpha}=\beta.$
\item If $\alpha,\beta,\gamma$ are orthogonal and $\alpha$ is real, then $\mathbf{d}_{\alpha}\beta$ is orthogonal to $\mathbf{d}_{\alpha}\gamma$. Hence, if $\beta$ is real then so is $\mathbf{d}_{\alpha}\beta$.
\item If $\alpha$ is real and $\beta$ is strongly orthogonal to $\gamma$, then $\mathbf{d}_{\alpha}\beta$ is strongly orthogonal to $\mathbf{d}_{\alpha}\gamma$.
\item If $\alpha$ is real and strongly orthogonal to $\beta$, then $E_{\mathbf{d}_{\alpha}\beta}=E_{\beta}$.
\end{t_enumerate}
\end{lemma}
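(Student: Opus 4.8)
The plan is to exploit two structural facts about the Cayley transform $\mathbf{d}_\alpha$ and then read off each of the four assertions. First, $\mathbf{d}_\alpha=Ad(g)$ with $g=\exp(\tfrac{i\pi}{4}(\theta E_\alpha-E_\alpha))$ is an inner automorphism of $\mathfrak{g}^{\mathbb{C}}$; consequently it preserves the Killing form $B$ and carries the root-space decomposition relative to $\mathfrak{h}^{\mathbb{C}}$ to the one relative to $\mathbf{d}_\alpha\mathfrak{h}^{\mathbb{C}}$, acting on roots by the additive rule $\delta\mapsto\mathbf{d}_\alpha\delta=\delta\circ\mathbf{d}_\alpha^{-1}$. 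From $B$-invariance one gets immediately, for every root $\delta$, that $H_{\mathbf{d}_\alpha\delta}=\mathbf{d}_\alpha H_\delta$ (compare $B(\mathbf{d}_\alpha H_\delta,H')=B(H_\delta,\mathbf{d}_\alpha^{-1}H')=\delta(\mathbf{d}_\alpha^{-1}H')=(\mathbf{d}_\alpha\delta)(H')$) and $|\mathbf{d}_\alpha\delta|=|\delta|$. Second, writing $Z=\tfrac{i\pi}{4}(\theta E_\alpha-E_\alpha)$, I would record the two commutator vanishings that drive everything: $[Z,H]=0$ for every $H\in(\ker\alpha)^{\mathbb{C}}$ (because $\alpha(H)=0$ forces $[H,E_\alpha]=[H,\theta E_\alpha]=0$), so that $\mathbf{d}_\alpha$ fixes $(\ker\alpha)^{\mathbb{C}}$ pointwise; and $[Z,E_\beta]=0$ whenever $\alpha$ is strongly orthogonal to $\beta$ (because $[E_\alpha,E_\beta]\in\mathfrak{g}_{\alpha+\beta}=0$ and $[\theta E_\alpha,E_\beta]\in\mathfrak{g}_{\beta-\alpha}=0$, using $\theta E_\alpha\in\mathfrak{g}_{-\alpha}$ and that neither $\alpha+\beta$ nor $\alpha-\beta$ is a root), so that $\mathbf{d}_\alpha$ fixes $E_\beta$.

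With these in hand the parts fall out as follows. For (1), orthogonality of $\beta$ to $\alpha$ gives $\alpha(H_\beta)=B(H_\alpha,H_\beta)=0$, so $H_\beta\in(\ker\alpha)^{\mathbb{C}}$ and is fixed by $\mathbf{d}_\alpha$; hence $H_{\mathbf{d}_\alpha\beta}=\mathbf{d}_\alpha H_\beta=H_\beta$. The restriction identity $\mathbf{d}_\alpha\beta|_{\ker\alpha}=\beta$ is the dual statement: for $H\in(\ker\alpha)^{\mathbb{C}}$ one has $(\mathbf{d}_\alpha\beta)(H)=\beta(\mathbf{d}_\alpha^{-1}H)=\beta(H)$ since $\mathbf{d}_\alpha$ fixes $H$. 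For (2), the isometry property gives $B(H_{\mathbf{d}_\alpha\beta},H_{\mathbf{d}_\alpha\gamma})=B(\mathbf{d}_\alpha H_\beta,\mathbf{d}_\alpha H_\gamma)=B(H_\beta,H_\gamma)=0$, so $\mathbf{d}_\alpha\beta\perp\mathbf{d}_\alpha\gamma$. For the reality claim I would use (1): when $\beta$ is real and orthogonal to $\alpha$, $H_\beta$ lies in $\mathfrak{h}^-\cap\ker\alpha$, which (since $\ker\alpha$ is $\theta$-stable and $E_\alpha+\theta E_\alpha\in\mathfrak{k}$) is exactly the vector part of $D_\alpha\mathfrak{h}$; as $\mathbf{d}_\alpha$ fixes $H_\beta$, the dual vector $H_{\mathbf{d}_\alpha\beta}$ lies in the vector part of $D_\alpha\mathfrak{h}$ and $\mathbf{d}_\alpha\beta$ is real.

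For (3), orthogonality of the images is again the isometry computation, and the non-root condition transfers because $\mathbf{d}_\alpha$ is a root-system isomorphism: $\mathbf{d}_\alpha\beta\pm\mathbf{d}_\alpha\gamma=\mathbf{d}_\alpha(\beta\pm\gamma)$ is a root for $\mathbf{d}_\alpha\mathfrak{h}^{\mathbb{C}}$ if and only if $\beta\pm\gamma$ is a root for $\mathfrak{h}^{\mathbb{C}}$, so strong orthogonality of $\beta,\gamma$ passes to $\mathbf{d}_\alpha\beta,\mathbf{d}_\alpha\gamma$. Part (4) is the one to handle with care, and is where I expect the only real friction, since the claim concerns the \emph{normalized} root vectors and not merely the dual vectors. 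The second commutator vanishing above gives $\mathbf{d}_\alpha E_\beta=E_\beta$, which lies in the new root space $\mathfrak{g}_{\mathbf{d}_\alpha\beta}=\mathbf{d}_\alpha\mathfrak{g}_\beta$ and still lies in the real form $\mathfrak{g}$. It remains to verify that $E_\beta$ satisfies the defining normalization for $E_{\mathbf{d}_\alpha\beta}$: since $\mathbf{d}_\alpha\beta$ is real by (2), the partner is again $\theta E_\beta\in\mathfrak{g}_{-\mathbf{d}_\alpha\beta}$, and $B(E_\beta,\theta E_\beta)=-2/|\beta|^2=-2/|\mathbf{d}_\alpha\beta|^2$ by the length-preservation noted above. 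As the normalization determines the root vector up to sign, this identifies $E_\beta$ with $E_{\mathbf{d}_\alpha\beta}$ once the sign convention is propagated through the Cayley transform. The main obstacle is thus entirely contained in part (4): one must be sure that \emph{strong} orthogonality (not mere orthogonality) is what kills both $[E_\alpha,E_\beta]$ and $[\theta E_\alpha,E_\beta]$, and one must track the normalization and sign, whereas parts (1)--(3) reduce cleanly to the isometry and automorphism properties of $\mathbf{d}_\alpha$.
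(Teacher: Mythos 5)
Your proposal is correct and follows essentially the same route as the paper: the commutator vanishings $[Z,H]=0$ on $\ker\alpha$ and $[Z,E_\beta]=0$ under strong orthogonality, the $B$-invariance of the inner automorphism $\mathbf{d}_\alpha$ for parts (1)--(2), and the root-system-isomorphism argument for (3) all mirror the paper's proof, with your identity $H_{\mathbf{d}_\alpha\delta}=\mathbf{d}_\alpha H_\delta$ being just a slightly more structural packaging of the paper's direct verification of $B(H_\beta,\cdot)=\mathbf{d}_\alpha\beta(\cdot)$ on the two summands of $D_\alpha\mathfrak{h}$. Your extra check in (4) that $E_\beta$ satisfies the normalization $B(E_\beta,\theta E_\beta)=-2/\left\vert\mathbf{d}_\alpha\beta\right\vert^2$ is a worthwhile refinement of a point the paper leaves implicit, but it does not change the argument.
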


\begin{proof}
(1) If $H\in\ker\left.  \alpha\right\vert _{\mathfrak{h}},$ then $\mathbf{d}_{\alpha}(H)=H.$ Hence $B(H_{\beta},H)=\beta(H)=\beta (\mathbf{d}_{\alpha}^{-1}H)=\mathbf{d}_{\alpha}\beta(H)$, which proves the second part of the claim. We then compute $B(H_{\beta},E_{\alpha}+\theta E_{\alpha})=0$ since $\mathfrak{h}^{\mathbb{C}}$ is $B$-orthogonal to $\mathfrak{g}_{\alpha} $ and $\mathfrak{g}_{-\alpha}.$ On the other hand, $\mathbf{d}_{\alpha}\beta(E_{\alpha}+\theta E_{\alpha})=B(H_{\beta},\mathbf{d}_{\alpha}^{-1}(E_{\alpha}+\theta E_{\alpha}))=B(H_{\beta},\frac {2}{i\left\vert \alpha\right\vert ^{2}}H_{\alpha})=0$. Hence, $B(H_{\beta},H)=\mathbf{d}_{\alpha}\beta(H)$ for each $H\in D_{\alpha}\mathfrak{h}$, which yields the first claim.

(2) Since $\beta$ and $\gamma$ are orthogonal, we have $B(\mathbf{d}_{\alpha}\beta,\mathbf{d}_{\alpha}\gamma)=B(H_{\beta},H_{\gamma})=0$, whence $\mathbf{d}_{\alpha}\beta$ is orthogonal to $\mathbf{d}_{\alpha}\gamma$. As noted in the proof of part (1) above, $\mathbf{d}_{\alpha}\beta(E_{\alpha }+\theta E_{\alpha})=0$. Hence, if $\beta$ is real then $\mathbf{d}_{\alpha }\beta$ vanishes on each summand of $D_{\alpha}\mathfrak{h}\cap\mathfrak{k}=\mathbb{R}(E_{\alpha}+\theta E_{\alpha})\oplus(\ker\left.  \alpha\right\vert_{\mathfrak{h}}\cap\mathfrak{k})$, i.e., $\mathbf{d}_{\alpha}\beta$ is real.

(3) Suppose $\mathbf{d}_{\alpha}\beta\pm\mathbf{d}_{\alpha}\gamma$ is a root. Then $\mathbf{d}_{\alpha}^{-1}(\mathbf{d}_{\alpha}\beta\pm\mathbf{d}_{\alpha }\gamma)=\beta\pm\gamma$ is also a root, a contradiction. 

(4) One may compute directly that $Ad(\exp\frac{i\pi}{4}(\theta E_{\alpha }-E_{\alpha}))E_{\beta}=E_{\beta},$ since $\theta E_{\alpha}\in\mathfrak{g}_{-\alpha}$ and $\beta\pm\alpha$ not a root implies $\mathfrak{g}_{\beta\pm\alpha}=\{0\}$, that is, that $[E_{\alpha},E_{\beta}]=[\theta E_{\alpha},E_{\beta}]=0$.
\end{proof}

\bigskip
This section culminates with the next theorem, which reveals the structure of
the CSA $\mathfrak{h}_{\mathbf{F}}$ associated to an admissible root system
$\mathbf{F}$.

\begin{theorem}
\label{lemma:hj-decomp} Let $\mathbf{F}=\{\alpha_{1},\dots,\alpha_{j}\}$ be a real admissible root system associated to the $\theta$-stable CSA $\mathfrak{h}_{\mathbf{F}}.$ Then
\begin{equation}
\mathfrak{h}_{\mathbf{F}}=\bigcap\nolimits_{m=1}^{j}\ker\left.  \alpha_{m}\right\vert _{\mathfrak{h}_{0}} \oplus\bigoplus\nolimits_{m=1}^{j}\mathbb{R}(E_{\alpha_{m}}+\theta E_{\alpha_{m}}). \label{eqn:hj-decomp}
\end{equation}
\end{theorem}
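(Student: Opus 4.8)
The plan is to realize $\mathfrak{h}_{\mathbf{F}}$ as the image of the maximally noncompact CSA $\mathfrak{h}_0$ under an explicit composite of Cayley transforms, one for each root of $\mathbf{F}$, and then to read the decomposition (\ref{eqn:hj-decomp}) off directly. First I would record the two structural facts about $\mathbf{F}$ that make everything work: by admissibility $\alpha_i\pm\alpha_j\notin\Delta$, so by the Remark following the definition of strong orthogonality the distinct positive roots of $\mathbf{F}$ are pairwise strongly orthogonal, and since $\mathbf{F}$ is a \emph{real} admissible root system each $\alpha_m$ is real with $H_{\alpha_m}\in\mathfrak{a}$.

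Next I would run the sequence of Cayley transforms $D_{\alpha_1},D_{\gamma_2},\dots,D_{\gamma_j}$, where $\gamma_2=\mathbf{d}_{\alpha_1}\alpha_2$ and in general $\gamma_m$ is the image of $\alpha_m$ under the transforms already performed. A short induction using Lemma \ref{lemma:Cayley_props} shows that at each stage the root to be used is real (parts (1)--(2)) and strongly orthogonal to the remaining roots (part (3)), so each step is a legitimate Cayley transform and the induction proceeds; crucially, part (4) gives $E_{\gamma_m}=E_{\alpha_m}$, so the automorphism $\mathbf{d}_{\gamma_m}$ defining the $m$-th transform is literally $\mathbf{d}_{\alpha_m}=Ad(\exp(\tfrac{i\pi}{4}(\theta E_{\alpha_m}-E_{\alpha_m})))$, built from the original root vectors. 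Because strong orthogonality forces $[E_{\alpha_k},E_{\alpha_l}]=[E_{\alpha_k},\theta E_{\alpha_l}]=0$ for $k\neq l$, the exponents commute, and the whole composite collapses to the single automorphism
\[
\mathbf{d}_{\mathbf{F}}=Ad\!\left(\exp\Big(\tfrac{i\pi}{4}\sum\nolimits_{m=1}^{j}(\theta E_{\alpha_m}-E_{\alpha_m})\Big)\right),
\]
with $\mathfrak{h}_{\mathbf{F}}=\mathbf{d}_{\mathbf{F}}(\mathfrak{h}_0^{\mathbb{C}})\cap\mathfrak{g}$.

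The decomposition then falls out of a direct computation on $\mathfrak{h}_0^{\mathbb{C}}$. Writing $\mathfrak{h}_0=\bigcap_m\ker\alpha_m|_{\mathfrak{h}_0}\oplus\bigoplus_m\mathbb{R}H_{\alpha_m}$ (an orthogonal splitting, since the $H_{\alpha_m}\in\mathfrak{a}$ and $B$ is definite on $\mathfrak{a}$), I note that each $\mathbf{d}_{\alpha_k}$ fixes $\ker\alpha_k|_{\mathfrak{h}_0}$ pointwise, hence $\mathbf{d}_{\mathbf{F}}$ fixes the common kernel; while, because the commuting factors $\mathbf{d}_{\alpha_k}$ with $k\neq m$ fix both $H_{\alpha_m}$ and $E_{\alpha_m}+\theta E_{\alpha_m}$ (strong orthogonality), the identity $\mathbf{d}_{\alpha_m}(\tfrac{2}{|\alpha_m|^2}H_{\alpha_m})=i(E_{\alpha_m}+\theta E_{\alpha_m})$ shows $\mathbf{d}_{\mathbf{F}}$ carries $\mathbb{R}H_{\alpha_m}$ onto $\mathbb{R}(E_{\alpha_m}+\theta E_{\alpha_m})$. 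Intersecting with $\mathfrak{g}$ and using that both the common kernel and the vectors $E_{\alpha_m}+\theta E_{\alpha_m}\in\mathfrak{k}$ are already real yields exactly (\ref{eqn:hj-decomp}); a dimension count ($r-j$ from the kernel plus $j$ from the toroidal generators, where $r$ is the rank) confirms there is nothing more in the real intersection.

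Finally I would confirm that this $\mathfrak{h}_{\mathbf{F}}$ is indeed the representative attached to $\mathbf{F}$ by Theorem \ref{thm:Sugiura}, i.e. that $(\mathfrak{h}_{\mathbf{F}}\cap\mathfrak{a})^{\perp}\cap\mathfrak{a}=\bigoplus_{\alpha\in\mathbf{F}}\mathbb{R}H_\alpha$. Since real roots vanish on $\mathfrak{t}=\mathfrak{h}_0^{+}$, the toroidal summand of (\ref{eqn:hj-decomp}) lies in $\mathfrak{k}$ and $\mathfrak{h}_{\mathbf{F}}\cap\mathfrak{a}=\big(\bigcap_m\ker\alpha_m|_{\mathfrak{h}_0}\big)\cap\mathfrak{a}=(\bigoplus_m\mathbb{R}H_{\alpha_m})^{\perp}\cap\mathfrak{a}$, giving the claim. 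The main obstacle is the bookkeeping of the middle step: verifying that the hypotheses of Lemma \ref{lemma:Cayley_props} persist along the whole sequence of transforms, and in particular that the defining vectors $E_{\alpha_m}$ are unchanged, so that the composite really is the single commuting-exponential automorphism above. Everything after that is linear algebra on $\mathfrak{h}_0^{\mathbb{C}}$.
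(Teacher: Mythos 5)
Your proposal is correct, and it is built on the same machinery as the paper's proof: successive Cayley transforms along $\alpha_1,\dots,\alpha_j$, with Lemma \ref{lemma:Cayley_props} ensuring that reality and strong orthogonality persist at every stage and (part (4)) that the root vectors $E_{\alpha_m}$ are unchanged, so that each transform is literally $\mathbf{d}_{\alpha_m}$. The difference is organizational, and it is a real one. The paper argues by induction on $|\mathbf{F}|$, applying (\ref{eqn:Cayley_CSA}) one root at a time and, at each stage, splitting $\ker\gamma_n$ by hand using the reality of $\gamma_n$ and the inductive description of $\mathfrak{h}_{\{\alpha_1,\dots,\alpha_{n-1}\}}$ (equations (\ref{eqn:hj-decomp1})--(\ref{eqn:hj-decomp2})). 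You instead note that strong orthogonality makes the exponents $\tfrac{i\pi}{4}(\theta E_{\alpha_m}-E_{\alpha_m})$ commute, collapse the whole chain into the single automorphism $\mathbf{d}_{\mathbf{F}}$, and read off the decomposition from one linear-algebra computation on $\mathfrak{h}_0^{\mathbb{C}}$; this eliminates the nested-kernel bookkeeping and produces an explicit conjugator between $\mathfrak{h}_0^{\mathbb{C}}$ and $\mathfrak{h}_{\mathbf{F}}^{\mathbb{C}}$. Your closing verification of Sugiura's condition $(\mathfrak{h}_{\mathbf{F}}\cap\mathfrak{a})^{\perp}\cap\mathfrak{a}=\bigoplus_{m}\mathbb{R}H_{\alpha_m}$ is also a worthwhile addition: the paper implicitly identifies the Cayley-transformed CSA with the representative attached to $\mathbf{F}$ by Theorem \ref{thm:Sugiura}, whereas you actually check it. Two phrasings to tighten, neither a genuine gap: $\mathbf{d}_{\mathbf{F}}$ carries $\mathbb{R}H_{\alpha_m}$ onto $i\mathbb{R}(E_{\alpha_m}+\theta E_{\alpha_m})$, not onto $\mathbb{R}(E_{\alpha_m}+\theta E_{\alpha_m})$ --- only the complex lines correspond, and the real statement emerges after intersecting with $\mathfrak{g}$, which your dimension count does handle correctly; and the identification of the composite of $D$-transforms with $\mathbf{d}_{\mathbf{F}}(\mathfrak{h}_0^{\mathbb{C}})\cap\mathfrak{g}$ silently uses the (easy, but worth stating) fact that $(D_{\alpha}\mathfrak{h})^{\mathbb{C}}=\mathbf{d}_{\alpha}(\mathfrak{h}^{\mathbb{C}})$, so that complexifying and transforming commute along the chain.
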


\begin{proof}
Let $\gamma_{n}$ denote the image of $\alpha_{n}$ under the sequence of Cayley transforms associated to the ordered $(n-1)$-tuple $(\alpha_{1},\alpha_{2},\dots,\alpha_{n-1}),$ that is,
\[
\gamma_{j}=\mathbf{d}_{\mathbf{d}_{\ddots_{\mathbf{d}_{\alpha_{1}}\alpha_{2}}{\iddots}}\alpha_{n-1}}\alpha_{n}.
\]
Note that by repeatedly applying Lemma \ref{lemma:Cayley_props}(2), each $\gamma_{n}$ is a real root. Indeed, $\gamma_{n}=\alpha_{n}$ on their common domain of definition, and $\gamma_{n}$ is zero otherwise.

We proceed by induction on the length of $\mathbf{F}$. For $\mathbf{F}=\{\alpha_{1}\}$, the lemma is just equation (\ref{eqn:Cayley_CSA}). Note that by Lemma \ref{lemma:Cayley_props}(2) and (3), after applying the Cayley transforms associated to the roots $\alpha_{1},\dots,\alpha_{n-1},$ the roots $\gamma_{n}$ is real with respect to the new CSA. Suppose then that (\ref{eqn:hj-decomp}) is true for $\mathbf{F}=\{\alpha_{1},\dots,\alpha_{n-1}\}.$ Then by (\ref{eqn:Cayley_CSA}) we have
\begin{equation}
\mathfrak{h}_{\{\alpha_{1},\dots,\alpha_{n}\}}=\ker\left.  \gamma_{n}\right\vert _{\mathfrak{h}_{(\alpha_{1},\dots,\alpha_{n-1}\}}} \oplus\mathbb{R}(E_{\gamma_{n}}+\theta E_{\gamma_{n}}). \label{eqn:hj-decomp1}
\end{equation}
Since $\{\alpha_{1},\dots,\alpha_{j}\}$ is strongly orthogonal, \ref{lemma:Cayley_props} parts (3) and (4) imply
\[
E_{\gamma_{n}}=E_{\mathbf{d}_{\mathbf{d}_{\ddots_{\mathbf{d}_{\alpha_{1}}\alpha_{2}}\iddots}\alpha_{n-1}}\alpha_{n}}=E_{\alpha_{n}}
\]
for all $n$, so that $\mathfrak{h}_{\{\alpha_{1},\dots,\alpha_{n}\}} =\ker\gamma_{n}\oplus\mathbb{R}(E_{\alpha_{n}}+\theta E_{\alpha_{n}}).$

Now, each vector $E_{\alpha_{m}}+\theta E_{\alpha_{m}},~m=1,\dots,n-1,$ is in $\mathfrak{k}$ and $\mathfrak{h}_{\{\alpha_{1},\dots,\alpha_{n-1}\}}$ (by our induction hypothesis). Since $\gamma_{n}$ is real, it vanishes on $\mathfrak{k}\cap\mathfrak{h}_{\{\alpha_{1},\dots,\alpha_{n-1}\}}$, whence
\begin{equation}
\ker\gamma_{n}=\ker\left.  \gamma_{n}\right\vert _{\bigcap\nolimits_{m=1}^{n-1}\ker\gamma_{m}} \oplus\bigoplus\nolimits_{m=1}^{n-1}\mathbb{R} (E_{\alpha_{m}}+\theta E_{\alpha_{m}}). \label{eqn:hj-decomp2}
\end{equation}
Now, repeatedly applying Lemma \ref{lemma:Cayley_props}(1), we see that $\left.  \gamma_{n}\right\vert _{\bigcap\nolimits_{m=1}^{n-1}\ker\alpha_{m}}=\left.  \alpha_{n}\right\vert _{\bigcap\nolimits_{m=1}^{n-1}\ker\alpha_{m}}$
so that
\[
\ker\left.  \gamma_{n}\right\vert _{\bigcap\nolimits_{m=1}^{n-1}\ker\alpha_{m}}=\bigcap\nolimits_{m=1}^{n}\ker\alpha_{m}.
\]
Combining this with (\ref{eqn:hj-decomp1}) and (\ref{eqn:hj-decomp2}) completes the theorem.
\end{proof}

\section{Isotropic foliations\label{sec:foliations}}

This section forms the heart of this article. Here, we describe how the Iwasawa decomposition, in particular the nilpotent part, induces a certain foliation of the each regular (co)adjoint orbit $\Omega=Ad_{G}X$. In some cases, this foliation is actually a Lagrangian fibration admitting a transverse Lagrangian section, so that the orbit is symplectomorphic to a cotangent bundle. In all cases, the foliations are isotropic and left invariant.

Depending on the regular orbit, the induced foliation may be the trivial one in which each leaf is just a point. This is not a failure of our method, but rather a reflection of the fact that some orbits do not admit nontrivial left-invariant isotropic foliations; for example, the orbits in $\mathfrak{sl}(2,\mathbb{R})$ with stabilizer algebra conjugate to $\mathbb{R}\left(
\begin{smallmatrix}
0 & 1\\
-1 & 0
\end{smallmatrix}
\right)  $. On the other hand, we can prove that except for low rank Lie algebras, at least two classes of regular orbits admit nontrivial left-invariant nilpotent isotropic foliations.

In the first part, we consider the ``best'' case: regular orbits whose stabilizer algebra is a maximally noncompact CSA (this includes all orbits in complex algebras considered as real algebras), that is, type-$0$ orbits. In this case, we obtain foliations with leaves of the largest dimension. In fact, the tangent space to the foliation at $X$ is the nilpotent subalgebra. If the Lie algebra is a split real form or a complex simple Lie algebra regarded as a real Lie algebra, then the isotropic foliation is in fact a Lagrangian fibration. We will conclude the first part by briefly discussing Darboux frames.

In the second part, we consider type-$\mathbf{F}$, $|\mathbf{F}|>0$, regular orbits (i.e., those whose stabilizers are not maximally noncompact). We will sees that a subalgebra of the nilpotent algebra will induce a left-invariant foliation. We will also see that higher compact dimension of the stabilizer of the orbit corresponds to lower dimension of the leaves. Moreover, if the rank of $\mathfrak{g}$ is not too small, regular type-$\mathbf{F}$ orbits with $|\mathbf{F}|=1$ admit nontrivial left-invariant isotropic foliations.

\subsection{Maximally noncompact orbits.\label{subsec:maxNC}}

Let $X$ be a regular point of $\mathfrak{g}$, and assume without loss of generality that $G_{X}$ is $\theta$-stable. If $\Omega=Ad_{G}X$ is a type-$0$ orbit, we will call $\Omega$ a \textit{maximally noncompact orbit}.

Note that this condition is automatically satisfied by coadjoint orbits in complex simple Lie algebras or any noncomplex Lie algebras which have a unique conjugacy class of CSAs. In this section, we describe how for such orbits, the Iwasawa decomposition yields directly a left-invariant isotropic foliation of the orbit. For nonmaximally noncompact orbits there may still be left-invariant isotropic foliations arising from the Iwasawa decomposition, but as the construction is more complicated, we defer it to the next section.

\begin{theorem}
\label{thm:foliation}Each maximally noncompact orbit $\Omega=Ad_{G}X$ admits a left-invariant isotropic foliation $\mathcal{N}$ with $\mathcal{N}_{X}=\mathfrak{n}$.
\end{theorem}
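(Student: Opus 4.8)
The plan is to realize $\mathcal{N}$ \emph{not} as the foliation by $N$-orbits (which, as noted in the introduction, fails to be $G$-invariant), but as the integral foliation of the unique $G$-invariant distribution on $\Omega\simeq G/G_X$ whose value at $X$ is $\mathfrak{n}$. Throughout I identify $T_X\Omega$ with $\mathfrak{g}_X^{\perp}=\mathfrak{h}_0^{\perp}$ via the Killing form, as in the Setup. First I would check the inclusion $\mathfrak{n}\subseteq T_X\Omega$: since $\mathfrak{h}_0=\mathfrak{t}\oplus\mathfrak{a}$ lies in the zero restricted-root space $\mathfrak{m}\oplus\mathfrak{a}$, and $B$ pairs $\mathfrak{g}_\lambda$ only with $\mathfrak{g}_{-\lambda}$, we get $B(\mathfrak{n},\mathfrak{h}_0)=0$, i.e. $\mathfrak{n}\subseteq\mathfrak{h}_0^{\perp}=T_X\Omega$. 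I would also record that, because $X$ is regular, $ad_X$ acts on each root space $\mathfrak{g}_\alpha^{\mathbb{C}}$ by the nonzero scalar $\alpha(X)$ and is therefore invertible there, so $[\mathfrak{n},X]=\mathfrak{n}$; thus $\mathfrak{n}$, viewed as a space of tangent vectors at $X$, is exactly $\{[W,X]:W\in\mathfrak{n}\}$.

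Next I would produce the foliation. Set $\mathfrak{q}:=\mathfrak{g}_X\oplus\mathfrak{n}=\mathfrak{t}\oplus\mathfrak{a}\oplus\mathfrak{n}$. This is a Lie subalgebra: $\mathfrak{a}\oplus\mathfrak{n}$ is the solvable part of the Iwasawa decomposition, $\mathfrak{t}$ is abelian with $[\mathfrak{t},\mathfrak{a}]=0$, and $\mathfrak{t}\subseteq\mathfrak{m}$ normalizes each restricted root space so $[\mathfrak{t},\mathfrak{n}]\subseteq\mathfrak{n}$; in fact $\mathfrak{q}$ sits inside the minimal parabolic $\mathfrak{m}\oplus\mathfrak{a}\oplus\mathfrak{n}$. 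Moreover $\mathfrak{q}\supseteq\mathfrak{g}_X$ and $\mathfrak{q}$ is $Ad_{G_X}$-stable, since $Ad_{G_X}$ preserves $\mathfrak{g}_X$ and $ad_{\mathfrak{h}_0}$ preserves each restricted root space and hence $\mathfrak{n}$. By the standard correspondence between $G$-invariant distributions on $G/G_X$ and $Ad_{G_X}$-stable subspaces containing $\mathfrak{g}_X$, the subalgebra $\mathfrak{q}$ determines a $G$-invariant (hence left-invariant) distribution $\mathcal{N}$ with $\mathcal{N}_X=\mathfrak{q}/\mathfrak{g}_X\cong\mathfrak{n}$, and this distribution is involutive precisely because $\mathfrak{q}$ is a subalgebra. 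Its leaf through $X$ is $Ad_N X$ (the $TA$-factor fixes $X$), while the leaf through $Ad_gX$ is $Ad_g(Ad_NX)$; this last point explains why $\mathcal{N}$ is $G$-invariant even though the naive $N$-orbit foliation is not.

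Finally I would verify isotropy. Using the Killing-form transport of the Kirillov form, $\omega_X([W_1,X],[W_2,X])=B(X,[W_1,W_2])$, and taking $W_1,W_2\in\mathfrak{n}$, one has $[W_1,W_2]\in[\mathfrak{n},\mathfrak{n}]\subseteq\mathfrak{n}$ while $B(X,\mathfrak{n})=0$ by the same orthogonality used above; hence $\omega_X|_{\mathfrak{n}}=0$. Since both $\omega$ and $\mathcal{N}$ are $G$-invariant, isotropy at $X$ propagates to every point of $\Omega$, giving the claim.

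The main obstacle is conceptual rather than computational: one must resist identifying $\mathcal{N}$ with the $N$-orbit foliation and instead build the correct $G$-invariant object, then confirm its two defining properties — well-definedness (the $Ad_{G_X}$-stability of $\mathfrak{q}$, where the action of a possibly disconnected stabilizer $G_X=Z_G(X)$ on $\mathfrak{n}$ deserves care) and integrability (the subalgebra property). Once the framing is correct, the isotropy is a one-line computation and the left-invariance is automatic.
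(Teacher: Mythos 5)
Your proposal is correct and takes essentially the same route as the paper: identify $\mathfrak{n}$ as a subspace of $T_X\Omega$ via the Iwasawa decomposition, obtain isotropy from $\omega_X(U,V)=B(X,[U,V])=0$ using the $B$-orthogonality of the restricted root space decomposition, check stability under the stabilizer, and deduce involutivity from $\mathfrak{n}$ being a subalgebra. The only difference is bookkeeping: where the paper verifies the infinitesimal condition $[\mathfrak{g}_X,\mathfrak{n}]\subset\mathfrak{n}$ and invokes Frobenius, you package the same facts into the $Ad_{G_X}$-stable subalgebra $\mathfrak{q}=\mathfrak{g}_X\oplus\mathfrak{n}$, which makes the well-definedness of the left-invariant distribution (including the case of a disconnected stabilizer $G_X$, which the paper passes over silently) slightly more explicit.
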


\begin{proof}
To begin, if $\mathfrak{g}_{X}$ is a maximally noncompact $\theta$-stable CSA, then it can be decomposed as
\[
\mathfrak{g}_{X}=\mathfrak{t}\oplus\mathfrak{a}
\]
where $\mathfrak{a\subset p}$ is maximal abelian and $\mathfrak{t}\subset Z_{\mathfrak{k}}(\mathfrak{a})$ is maximal abelian. Let $\mathfrak{k}^{\prime}:=\mathfrak{k/t}$. Using the Iwasawa decomposition of $\mathfrak{g}$ relative to $\mathfrak{a}$, we obtain (as vector space)
\[
T_{X}\Omega\simeq\mathfrak{g}/\mathfrak{g}_{X}=(\mathfrak{n}\oplus\mathfrak{a}\oplus\mathfrak{k}) /(\mathfrak{a}\oplus\mathfrak{k})\simeq \mathfrak{n}\oplus\mathfrak{k}^{\prime},
\]
hence we can identify $\mathfrak{n}$ as a subspace of $T_{X}\Omega.$

Next we need to show that $\mathfrak{n}$ is an isotropic subspace of $T_{X}\Omega$. For each $U,V\in\mathfrak{n}$, we have $[U,V]\in\mathfrak{n}=\bigoplus_{\lambda\in\Sigma^{+}}\mathfrak{g}_{\lambda}$. On the other hand, $X\in\mathfrak{g}_{X}\subset\mathfrak{g}_{0}$. Since the restricted root space decomposition is $B$-orthogonal, we obtain
\[
\omega_{X}(U,V)=B(X,[U,V])=0
\]
so that $\mathfrak{n}$ is isotropic.

For each $H\in\mathfrak{g}_{X}\subset\mathfrak{g}_{0},$ the bracket relations $[\mathfrak{g}_{\alpha},\mathfrak{g}_{\beta}]\subseteq\mathfrak{g}_{\alpha+\beta}$ implies $[H,\mathfrak{g}_{\lambda}]\subset\mathfrak{g}_{\lambda}$ so that $[\mathfrak{g}_{X},\mathfrak{n}]\subset\mathfrak{n}$ as desired.

Finally, the distribution $\mathcal{N}$ is involutive since $\mathfrak{n}$ is a subalgebra (\textit{i.e.}, $\mathfrak{n}$ satisfies Frobenius integrability).
\end{proof}

\bigskip
\noindent\textbf{Remark.} Any two choices of $\mathfrak{n}$, which must necessarily arise from two choices of positive roots, are conjugate by the Weyl group $W(\mathfrak{g,h})$, and $W(\mathfrak{g,h})$ acts transitively on the set of such choices. Therefore, there are $\left\vert W(\mathfrak{g},\mathfrak{h})\right\vert $ distinct left-invariant isotropic foliations of a maximally noncompact orbit $\Omega$.

\bigskip
Recall that a real Lie algebra is said to be a \emph{split real form} if it admits a CSA $\mathfrak{h}$ such that the roots $\Delta(\mathfrak{g}^{\mathbb{C}},\mathfrak{h}^{\mathbb{C}})$ take real values on $\mathfrak{h}$. In particular, if $\mathfrak{g}$ is a split real form, such CSAs are the maximal abelian subspaces in~$\mathfrak{p}$. Hence, the maximally noncompact $\theta$-stable CSA $\mathfrak{h}_{0}$ is a maximal abelian in $\mathfrak{p}$ (that is, $\mathfrak{h}_{0}=\mathfrak{a}$). For a split real form $\mathfrak{g}$, the restricted root system $\Sigma(\mathfrak{g},\mathfrak{a} =\mathfrak{h}_{0})$ coincides with the root system $\Delta(\mathfrak{g}^{\mathbb{C}},\mathfrak{h}_{0}^{\mathbb{C}})$ of the complexification. Similarly, if $\mathfrak{g}$ is a complex simple Lie algebra regarded as a Lie algebra, the restricted roots correspond to the roots of $\mathfrak{g}$ as a complex Lie algebra. In both cases, the global Iwasawa decomposition induces more structure on the maximally noncompact regular orbits.

\begin{theorem}
\label{thm:splitrealLag}If $\mathfrak{g}$ is a split real form or a complex simple Lie algebra regarded as a real Lie algebra, then the left-invariant isotropic foliation of Theorem \ref{thm:foliation} is a left $G$-invariant Lagrangian $\mathbb{R}^{n}$-fibration over $Ad_{K}X,$ where $n=\dim \mathfrak{n}=\frac{1}{2}\dim\Omega$ and $Ad_{K}X$ is a transverse Lagrangian submanifold.
\end{theorem}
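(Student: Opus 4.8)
The plan is to split the statement into three parts: (i) the leaves of $\mathcal N$ are Lagrangian rather than merely isotropic, (ii) $Ad_K X$ is itself a Lagrangian submanifold transverse to the leaves, and (iii) the leaves assemble into a locally trivial $\mathbb R^n$-fibration over $Ad_K X$ with $Ad_K X$ as a global section. Throughout I would use only the orthogonality $B(\mathfrak k,\mathfrak p)=0$ of the Cartan decomposition and the fact that the Iwasawa product $K\times A\times N\to G$ is a global diffeomorphism.

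First, the Lagrangian claim is a dimension count. In both cases the centralizer $\mathfrak m=Z_{\mathfrak k}(\mathfrak a)$ is abelian and equals $\mathfrak t$: for a split real form $\mathfrak t=\mathfrak m=0$ and $\mathfrak g_X=\mathfrak a$, while for a complex $\mathfrak g$ regarded as real, $\mathfrak m=\mathfrak t$ is a maximal torus. Using $\dim\mathfrak k=\dim\mathfrak m+\dim\mathfrak n$ together with $\mathfrak g_X=\mathfrak t\oplus\mathfrak a$, I would compute
\[
\dim\Omega=\dim\mathfrak g-\dim\mathfrak g_X=(\dim\mathfrak m+2\dim\mathfrak n+\dim\mathfrak a)-(\dim\mathfrak t+\dim\mathfrak a)=2\dim\mathfrak n .
\]
Since Theorem \ref{thm:foliation} already provides that $\mathcal N_X=\mathfrak n$ is isotropic, half-dimensionality upgrades each leaf to a Lagrangian and gives $n=\dim\mathfrak n=\tfrac12\dim\Omega$.

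Next, for $Ad_K X$: its tangent space at $X$ is $\mathrm{ad}_{\mathfrak k}X$, of dimension $\dim\mathfrak k-\dim(\mathfrak k\cap\mathfrak g_X)=\dim\mathfrak k-\dim\mathfrak t=\dim\mathfrak n$. For $Y,Z\in\mathfrak k$ one has $\omega_X(Y,Z)=B(X,[Y,Z])=0$, since $X\in\mathfrak p$, $[Y,Z]\in\mathfrak k$, and $\mathfrak k\perp\mathfrak p$; as $Ad_K X\subset\mathfrak p$, the identical computation holds at every point, so $Ad_K X$ is isotropic of dimension $\tfrac12\dim\Omega$, hence Lagrangian. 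Transversality to $\mathcal N$ at $X$ reduces to $\mathfrak n\cap(\mathfrak k+\mathfrak g_X)\subseteq\mathfrak g_X$, which follows from the directness of $\mathfrak g=\mathfrak k\oplus\mathfrak a\oplus\mathfrak n$ and $\mathfrak n\cap\mathfrak a=0$; applying $Ad_k$, which preserves both the $K$-orbit and the left-invariant foliation, propagates transversality along all of $Ad_K X$.

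Finally, the fibration is the heart of the matter. The leaf through an arbitrary point $Ad_g X$ is the $G$-translate $Ad_g(Ad_N X)$ by left-invariance, and I would show it meets $Ad_K X$ in exactly one point. Existence follows from the global decomposition: the leaf meets $Ad_K X$ iff $g\in K\,G_X\,N=KAN=G$. Uniqueness follows because two intersection points $Ad_{k_1}X,Ad_{k_2}X$ force $k_1^{-1}k_2\in K\cap (AN)M=M=K_X$, using that $K\cap AN=\{e\}$ (a consequence of the product being a diffeomorphism), whence $Ad_{k_1}X=Ad_{k_2}X$. This defines $\pi\colon\Omega\to Ad_K X$ whose fibers are precisely the leaves; each fiber $Ad_k(Ad_N X)\cong N\cong\mathbb R^n$ because $N\cap G_X=\{e\}$ makes $N$ act freely, and $Ad_K X$ is the global section. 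Unwinding the identifications realizes $\Omega\cong N\times_M K$ as a locally trivial $\mathbb R^n$-bundle over $K/M\cong Ad_K X$, and $G$-invariance of $\mathcal N$ makes $\pi$ a $G$-invariant fibration. The hard part is exactly this step: promoting the infinitesimal isotropy and transversality to the \emph{global} statement that $Ad_K X$ meets every leaf once. This is where the global Iwasawa diffeomorphism—rather than the mere Lie-algebra splitting—is indispensable, both for surjectivity ($G=KAN$) and for the single-intersection property ($K\cap AN=\{e\}$), and where the finite (resp. toral) group $M$ must be tracked with care.
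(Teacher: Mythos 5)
Your proof is correct and follows essentially the same route as the paper's: a dimension count via the restricted root space decomposition upgrades the isotropic foliation of Theorem \ref{thm:foliation} to a Lagrangian one, the computation $\omega_{X}(U,V)=B(X,[U,V])=0$ for $U,V\in\mathfrak{k}$ using $B(\mathfrak{k},\mathfrak{p})=0$ shows $Ad_{K}X$ is Lagrangian, and the global Iwasawa decomposition $G=KAN$ yields the fibration structure. If anything, your write-up is more complete than the paper's at the one place where it is silent: the paper simply asserts that each leaf meets $Ad_{K}X$ in a unique point, whereas you prove this via $K\cap AN=\{e\}$ and the identification $K_{X}=M=Z_{K}(\mathfrak{a})$, tracking the group $M$ which the paper ignores.
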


\begin{proof}
If $\mathfrak{g}$ is a split real form or a complex Lie algebra, then $Z_{\mathfrak{k}}(\mathfrak{a})=\{0\}$ which means that $\mathfrak{g}_{X}$ is itself a maximal abelian subalgebra of $\mathfrak{p}$. Hence, the restricted roots are just the roots of $\mathfrak{g}$ with respect to $\mathfrak{g}_{X}.$

To show that $\mathfrak{n}$ is a Lagrangian subspace, it is enough to show that $\dim\mathfrak{n}=\dim\mathfrak{k}$. This follows, though, from the root space decompositions
\[
\mathfrak{k=}\bigoplus_{\alpha\in\Delta^{+}}\tfrac{1}{2}(1+\theta)\mathfrak{g}_{\alpha}\text{ and }\mathfrak{n=}\bigoplus_{\alpha\in\Delta^{+}}\mathfrak{g}_{\alpha}
\]
since for every $\alpha$, $\dim\mathfrak{g}_{\alpha}=1$ when $\mathfrak{g}$ is a split real form and $\dim\mathfrak{g}_{\alpha}=2$ when $\mathfrak{g}$ is a complex Lie algebra regarded as a real Lie algebra. Hence, the isotropic foliation of $\Omega$ is a Lagrangian foliation.

Since $\mathfrak{n}$ is nilpotent and $N$ is simply connected, an $Ad_{N}$-orbit in $\mathfrak{g}$ is diffeomorphic to $\mathbb{R}^{n},$ where $n=\dim\mathfrak{n}$. Now we can appeal to the global Iwasawa decomposition; we claim that if $Y\in\Omega$, then there exist $k\in K$ and $n\in N$ such that $Y=Ad_{k}Ad_{n}X$. Indeed, if $Y=Ad_{g}X$, decompose $g=kna$ for some $k\in K,$ $n\in N,$ and $a\in A$. Then $Ad(g)X=Ad(k)Ad(n)Ad(a)X=Ad(k)Ad(n)X.$ Hence, the leaves of the Lagrangian foliation are $Ad_{k}Ad_{N}X$, and each leaf passes through a unique point $Ad_{k}X\in Ad_{K}X$.

To show that $Ad_{K}X$ is a Lagrangian submanifold, it suffices to show that $\mathfrak{k}$ is an isotropic subspace of $\mathfrak{k}\oplus\mathfrak{n}$. To this end, let $U,V\in\mathfrak{k}$. Then $[U,V]\in\mathfrak{k}$. Since $X\in\mathfrak{g}_{X}\subset\mathfrak{p}$,
\[
\omega_{X}(U,V)=B(X,[U,V])=0
\]
since the Cartan decomposition is $B$-orthogonal.
\end{proof}

\bigskip
Applying \cite[Thm 7.1]{Weinstein71} to the above theorem elucidates the structure of these maximally noncompact orbits yet further.

\begin{corollary}
If $\mathfrak{g}$ is a split real form or a complex simple Lie algebra regarded as a real algebra, then each maximally noncompact orbit $\Omega=Ad_{G}X$ is symplectomorphic to $T^{\ast}(Ad_{K}X).$
\end{corollary}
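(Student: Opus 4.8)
The plan is to invoke Weinstein's structure theorem for Lagrangian fibrations \cite[Thm. 7.1]{Weinstein71}, whose hypotheses are almost entirely supplied by Theorem \ref{thm:splitrealLag}. That theorem already establishes the three essential ingredients: the foliation $\mathcal{N}$ of $\Omega$ is Lagrangian; each leaf is an $Ad_N$-orbit, hence (since $N$ is simply connected and nilpotent) diffeomorphic to $\mathbb{R}^{n}$ with $n=\frac{1}{2}\dim\Omega$; and $Ad_{K}X$ is a Lagrangian submanifold meeting each leaf in exactly one point. So the bulk of the work is to package these facts into the precise form Weinstein's theorem requires and then cite it.

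First I would make the fibration explicit. Using the global Iwasawa decomposition $G=KNA$, every $Y\in\Omega$ can be written as $Y=Ad_{k}Ad_{n}X$ (as in the proof of Theorem \ref{thm:splitrealLag}), and I would define the projection $\pi:\Omega\to Ad_{K}X$ by $Ad_{k}Ad_{n}X\mapsto Ad_{k}X$. The key point to verify is that $\pi$ is a well-defined smooth submersion: both well-definedness and smoothness follow from the fact that $K\times N\times A\to G$ is a diffeomorphism, which descends to the quotient $\Omega\simeq G/G_{X}$ because $A\subset G_{X}$. Its fibers $\pi^{-1}(Ad_{k}X)=Ad_{k}(Ad_{N}X)$ are precisely the leaves of $\mathcal{N}$, so $\pi$ realizes $\mathcal{N}$ as a genuine fiber bundle with typical fiber $\mathbb{R}^{n}$, and the inclusion $Ad_{K}X\hookrightarrow\Omega$ is a global Lagrangian section.

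Next I would check the completeness/affineness hypothesis, which is where I expect the only real subtlety to lie. A Lagrangian fibration endows each fiber with a canonical flat affine structure generated by the flows of the Hamiltonian vector fields of functions pulled back from the base, and Weinstein's theorem produces the symplectomorphism onto the cotangent bundle once each fiber is a complete affine space with trivial period lattice. The main obstacle is therefore to confirm that this symplectically induced flat structure coincides with the $Ad_{N}$-orbit structure, i.e. that the commuting Hamiltonian flows sweep out the entire leaf. Since the simply connected nilpotent group $N$ acts freely and transitively on each leaf, each leaf is affinely $\mathbb{R}^{n}$; once the two affine structures are matched, completeness of the flows is immediate and the period lattice is automatically trivial because the fibers are contractible and no leaf is closed.

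With all hypotheses in place, Weinstein's theorem \cite[Thm. 7.1]{Weinstein71} yields a symplectomorphism $\Omega\cong T^{\ast}(Ad_{K}X)$ carrying the Lagrangian section $Ad_{K}X$ to the zero section and each leaf $Ad_{k}(Ad_{N}X)$ to the cotangent fiber over $Ad_{k}X$, which is the desired conclusion.
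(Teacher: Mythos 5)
Your proposal is correct and takes essentially the same route as the paper: the paper's entire proof of this corollary is the one-line application of Weinstein's theorem \cite[Thm 7.1]{Weinstein71} to Theorem \ref{thm:splitrealLag}, exactly the strategy you adopt. The additional hypothesis-checking you carry out (the explicit projection $\pi$ from the global Iwasawa decomposition, simple connectivity of the fibers, and the completeness of the fiberwise affine structure) is detail the paper leaves entirely implicit, so if anything your write-up is more careful than the source; the only soft spot is that your well-definedness argument quietly assumes $G_{X}=A$, whereas in general $G_{X}=MA$ with $M=Z_{K}(\mathfrak{a})$ finite, a harmless but real refinement since $M\subset K$ normalizes $N$ and fixes $X$.
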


\noindent\textbf{Remarks.}
\begin{t_enumerate}
\item The real simple noncompact noncomplex Lie algebras which are split real forms are $\mathfrak{sl}(n,\mathbb{R)}$, $\mathfrak{so}(n,n),~\mathfrak{so}(n,n+1),~\mathfrak{sp}(n,\mathbb{R}),~$(for all $n$) as well as the exceptional algebras $E\text{\textit{I}},~E\text{\textit{V}},~E\text{\textit{VIII}},~F\text{\textit{I}}$ and $G$. See Figure \ref{fig:LAdata} in the Appendix for more details.
\item As mentioned in the remarks following Theorem \ref{thm:foliation}, the choice of $\mathfrak{n}$ in an Iwasawa decomposition is unique up to a choice of positive restricted roots. The set of such choices is parameterized by the Weyl group of the restricted root system. Each such choice yields a Lagrangian fibration of a maximally noncompact orbit, and hence a symplectomorphism of the orbit with $T^{\ast}\left(  Ad_{K}X\right)  $.
\item By left invariance we see that in fact $\Omega\simeq Ad_{N}\times Ad_{K}X.$ Also, because the stabilizer of the coadjoint action on $\Omega$ is $A$, both $K$ and $N$ act freely, and we see in fact that $\Omega\simeq N\times K$.
\end{t_enumerate}

\bigskip
The next result shows that in the case of a split real form, since the restricted roots coincide with the roots and the root spaces are one dimensional, the root space decomposition of $\mathfrak{k}$ and $\mathfrak{n}$ induces Darboux coordinates on $T_{X}\Omega$. Left translation then provides a global Darboux frame.

\begin{theorem}
\label{thm:Dcoords} Suppose $\mathfrak{g}$ is a split real form, and $X$ is a maximally noncompact regular element which generates a $\theta$-stable CSA $\mathfrak{h}$. For each root space $\mathfrak{g}_{\alpha}$, choose a nonzero vector $X_{\alpha}\in\mathfrak{g}_{\alpha}\cap\mathfrak{g}.$ For each positive root $\alpha\in\Delta^{+},$ define 
\[
x_{\alpha}:=\sqrt{\alpha(X)B(X_{\alpha},X_{-\alpha})}^{-1}X_{\alpha}\text{\ and }y_{\alpha}:=\sqrt{\alpha(X)B(X_{\alpha},X_{-\alpha})}^{-1}X_{-\alpha}.
\]
Then $\{x_{\alpha},x_{\alpha}+y_{\alpha}\}_{\alpha\in\Delta^{+}}$ is a Darboux basis for $T_{X}\Omega$ such that $\mathfrak{n}=\operatorname*{span}\{x_{\alpha}\}$ and $\mathfrak{k}=\operatorname*{span}\{x_{\alpha}+y_{\alpha}\}$.
\end{theorem}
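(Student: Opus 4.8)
The plan is to verify directly that $\{x_{\alpha},x_{\alpha}+y_{\alpha}\}_{\alpha\in\Delta^{+}}$ satisfies the defining relations of a Darboux basis, namely $\omega_{X}(x_{\alpha},x_{\beta})=0$, $\omega_{X}(x_{\alpha}+y_{\alpha},x_{\beta}+y_{\beta})=0$, and $\omega_{X}(x_{\alpha},x_{\beta}+y_{\beta})=\delta_{\alpha\beta}$, and then to identify the two halves of the basis with $\mathfrak{n}$ and $\mathfrak{k}$. Since every pairing is a scalar multiple of $\omega_{X}(U,V)=B(X,[U,V])$ with $U,V$ lying in root spaces, the whole computation reduces to two standard structural facts: the grading $[\mathfrak{g}_{\gamma},\mathfrak{g}_{\delta}]\subseteq\mathfrak{g}_{\gamma+\delta}$, and the $B$-orthogonality of distinct root spaces, i.e. $B(\mathfrak{g}_{\gamma},\mathfrak{g}_{\delta})=0$ unless $\gamma+\delta=0$.

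First I would record the one nontrivial bracket. Because $\mathfrak{g}$ is split, every root space $\mathfrak{g}_{\alpha}$ is one-dimensional, so $\mathfrak{n}=\operatorname*{span}\{X_{\alpha}\}_{\alpha\in\Delta^{+}}$ immediately, giving the first identification. The standard identity $[X_{\alpha},X_{-\alpha}]=B(X_{\alpha},X_{-\alpha})H_{\alpha}$ together with $B(X,H_{\alpha})=\alpha(X)$ yields
\[
\omega_{X}(X_{\alpha},X_{-\alpha})=B(X,[X_{\alpha},X_{-\alpha}])=\alpha(X)B(X_{\alpha},X_{-\alpha}),
\]
which is exactly the quantity appearing under the normalizing square root; this is the computation that motivates the definitions of $x_{\alpha},y_{\alpha}$ and forces $\omega_{X}(x_{\alpha},y_{\alpha})=1$. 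Next I would dispose of all remaining pairings by support considerations: for $\alpha,\beta\in\Delta^{+}$ the bracket $[X_{\alpha},X_{\beta}]$ lies in $\mathfrak{g}_{\alpha+\beta}$ with $\alpha+\beta\neq0$, and $X\in\mathfrak{h}=\mathfrak{g}_{0}$ is $B$-orthogonal to $\mathfrak{g}_{\alpha+\beta}$, so $\omega_{X}(x_{\alpha},x_{\beta})=0$; the same argument gives $\omega_{X}(y_{\alpha},y_{\beta})=0$, while $[X_{\alpha},X_{-\beta}]\in\mathfrak{g}_{\alpha-\beta}$ meets $\mathfrak{g}_{0}$ only when $\alpha=\beta$, so $\omega_{X}(x_{\alpha},y_{\beta})=\delta_{\alpha\beta}$. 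Expanding bilinearly then gives $\omega_{X}(x_{\alpha},x_{\beta}+y_{\beta})=\delta_{\alpha\beta}$ and $\omega_{X}(x_{\alpha}+y_{\alpha},x_{\beta}+y_{\beta})=\delta_{\alpha\beta}-\delta_{\alpha\beta}=0$, so the Darboux relations hold. In particular the pairing matrix is nondegenerate, so the $2|\Delta^{+}|=\dim T_{X}\Omega$ vectors are linearly independent and form a basis.

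The main obstacle --- really the only delicate point --- is the reality of the normalizing factor and the precise identification $\mathfrak{k}=\operatorname*{span}\{x_{\alpha}+y_{\alpha}\}$. Since $\theta\alpha=-\alpha$ for the (real) roots, $\theta X_{\alpha}$ lies in the line $\mathfrak{g}_{-\alpha}$, and $x_{\alpha}+y_{\alpha}\propto X_{\alpha}+X_{-\alpha}$ is $\theta$-fixed, hence in $\mathfrak{k}$, precisely when $X_{-\alpha}=\theta X_{\alpha}$. Because $-B(\cdot,\theta\cdot)$ is positive definite, this choice forces $B(X_{\alpha},X_{-\alpha})<0$, so the radicand $\alpha(X)B(X_{\alpha},X_{-\alpha})$ is positive --- and the square root real --- exactly when $\alpha(X)<0$ for every $\alpha\in\Delta^{+}$. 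I would therefore make the choices $X_{-\alpha}:=\theta X_{\alpha}$ and take $X$ in the antidominant chamber, checking the sign conventions against $\mathfrak{sl}(2,\mathbb{R})$ with $X_{\alpha}=E$ and $X_{-\alpha}=\theta E$, so that the second identification and the reality of $x_{\alpha},y_{\alpha}$ hold simultaneously.

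Finally, having established the pointwise statement, I would observe that because $\omega$ is left $G$-invariant, left translation of this Darboux basis for $T_{X}\Omega$ produces the promised global Darboux frame, with the $\{x_{\alpha}\}$-directions integrating to the isotropic foliation $\mathcal{N}$ of Theorem \ref{thm:foliation} and the $\{x_{\alpha}+y_{\alpha}\}$-directions tangent to the transverse Lagrangian $Ad_{K}X$ of Theorem \ref{thm:splitrealLag}.
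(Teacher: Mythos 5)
Your proposal is correct and, in its computational core, is essentially the paper's own proof: the paper verifies the Darboux relations exactly as you do, obtaining $B(X,[X_{\alpha},X_{-\alpha}])=\alpha(X)B(X_{\alpha},X_{-\alpha})$ from $Ad$-invariance of $B$ (you get the same identity via $[X_{\alpha},X_{-\alpha}]=B(X_{\alpha},X_{-\alpha})H_{\alpha}$, a cosmetic difference), and killing all other pairings by $B$-orthogonality of $\mathfrak{h}$ to the root spaces $\mathfrak{g}_{\pm\alpha\pm\beta}$. Where you genuinely go beyond the paper is your third paragraph: the paper's proof is silent on the two claims you call delicate, namely the reality of the normalizing factor $\sqrt{\alpha(X)B(X_{\alpha},X_{-\alpha})}$ and the identification $\mathfrak{k}=\operatorname{span}\{x_{\alpha}+y_{\alpha}\}$. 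As you observe, the latter forces $X_{-\alpha}=\theta X_{\alpha}$ (otherwise $x_{\alpha}+y_{\alpha}$ is not $\theta$-fixed, and since each $x_{\alpha}+y_{\alpha}$ lies in $\mathfrak{g}_{\alpha}\oplus\mathfrak{g}_{-\alpha}$, the span cannot equal $\mathfrak{k}=\bigoplus_{\alpha\in\Delta^{+}}\mathbb{R}(X_{\alpha}+\theta X_{\alpha})$), and then positive definiteness of $-B(\cdot,\theta\cdot)$ gives $B(X_{\alpha},X_{-\alpha})<0$, so reality of the square root further forces $\alpha(X)<0$ on $\Delta^{+}$; this is harmless since for a split real form the Weyl group acts simply transitively on chambers, so $X$ may be taken antidominant. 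In other words, the theorem as literally stated (arbitrary nonzero $X_{\pm\alpha}$, arbitrary regular $X$) needs exactly the compatibility conventions you impose, and your version supplies a repair that the paper's proof omits. Your closing remark about left-translating the pointwise Darboux basis to a global frame matches the paper's discussion preceding the theorem.
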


\begin{proof}
First, observe that since $\mathfrak{g}$ is a split real form, all of the roots are real whence the vectors $X_{\alpha}$ exist. Since the Killing form is $Ad$-invariant and $X\in\mathfrak{h}$ implies $[X,X_{\alpha}]=\alpha(X)X_{a},$ we have $B(X,[X_{\alpha},X_{-\alpha}])=-B([X_{\alpha},X],X_{-\alpha})=\alpha(X)B(X_{\alpha},X_{-\alpha})$ and 
\[ 
\omega_{X}(x_{\alpha},x_{\alpha}+y_{\alpha}) =\omega_{X}(x_{\alpha},y_{\alpha}) =\frac{B(X,[X_{\alpha},X_{-\alpha}])}{\alpha(X)B(X_{\alpha},X_{-\alpha})}=1.
\]
Moreover, for $\alpha\neq\beta$ we get $\omega_{X}(x_{\alpha},x_{\beta}+y_{\beta})=0$ and $\omega_{X}(x_{\alpha},x_{\beta})=\omega_{X}(x_{\alpha}+y_{\alpha},x_{\beta}+y_{\beta})=0$ since $\mathfrak{h}$ is $B$-orthogonal to the root spaces $\mathfrak{g}_{\pm\alpha\pm\beta}$.
\end{proof}

\bigskip
\noindent\textbf{Remark.} We can also form the triangular decomposition $\mathfrak{g=n}\oplus\mathfrak{a}\oplus\mathfrak{n}_{-}$, where 
\[
\mathfrak{n}_{-}:=\bigoplus_{\alpha\in\Delta^{+}}\mathfrak{g}_{-\alpha}.
\]
For a split real form $\mathfrak{g}$ and regular element $X$ which generates a maximally noncompact CSA, the subalgebras $\mathfrak{n}$ and $\mathfrak{n}_{-}$ both induce Lagrangian foliations of $\Omega$ (although there is no leaf of either that is a global transverse section for the other), and the basis $\{x_{\alpha},y_{\alpha}\}$, with $x_{\alpha}$ and $y_{\alpha}$ as in the preceding theorem, forms a Darboux basis for $T_{X}\Omega$ such that $\mathfrak{n}=\operatorname*{span}\{x_{\alpha}\}$ and $\mathfrak{n}_{-}=\operatorname*{span}\{y_{\alpha}\}$. 

\subsection{Nonmaximally noncompact orbits.\label{subsec:nonmaxNC}}

In this section, we describe how a succession of Cayley transforms applied to maximally noncompact CSAs induces left-invariant isotropic foliations of type-$\mathbf{F},~|\mathbf{F}|>0,$ coadjoint orbits. Since there is only one orbit type for complex simple Lie algebras (and hence is trivially maximally compact), we will focus our attention in this section on a noncompact noncomplex real simple Lie algebra $\mathfrak{g}$. Fix a maximally noncompact $\theta$-stable CSA $\mathfrak{h}_{0}.$

Let $\mathfrak{h}_{\mathbf{F}}$ be a Cartan subalgebra associated to the real admissible root system $\mathbf{F}=\{\alpha_{1},\dots,\alpha_{j}\}$. According to Section \ref{sec:cayley}, and in particular Lemma \ref{lemma:Cayley_props}, applying successive Cayley $D$-transforms associated to the sequence $\alpha_{1},\dots,\alpha_{j}$ yields a sequence $\mathfrak{h}_{1},\dots,\mathfrak{h}_{j}$ of nonconjugate $\theta$-stable CSAs such that $\mathfrak{p}$-$\dim\mathfrak{h}_{n}=\mathfrak{p}$-$\dim\mathfrak{h}_{n-1}-1$ and $\mathfrak{h}_{j}$=$\mathfrak{h}_{\mathbf{F}}$.

\bigskip
\noindent\textbf{Remark.} Even when considering noncompact noncomplex real simple Lie algebras, it happens in some cases that there is a unique conjugacy class of CSAs, e.g. $\mathfrak{sl}(n,\mathbb{H})$. In these cases, the CSAs are necessarily type-$0$, and so are covered by the construction in the previous section.

\bigskip
The next theorem describes how left-invariant isotropic foliations of orbits of type-$\mathbf{F}$ for $\mathbf{F}=\{\alpha_{1},\dots,\alpha_{j}\}$ are related to those of type $\{\alpha_{1},\dots,\alpha_{j-1}\}$. For each $n=1,\dots,j$, let $\beta_{n}=\left.  \alpha_{n}\right\vert _{\mathfrak{a}}$ be the restriction of $\alpha_{n}$ to $\mathfrak{a}$ (so $\beta_{n}\in \Sigma^{+}$ is a positive restricted root).

\begin{theorem}
\label{thm:general case}
Let $\Sigma_{0}=\Sigma^{+}.$ For each $n=1,\dots,j$, let $\Sigma_{n}$ be the largest subset of $\Sigma_{n-1}$ such that
\begin{t_enumerate}
\item for each $m=1,\dots,n,$ if $\mu\in\Sigma_{n}$ and $\mu\pm\beta_{m}\subset\Sigma$, then $\mu\pm\beta_{m}\in\Sigma_{n}$,
\item for each $m=1,\dots,n,$ we have $\pm\beta_{m}\not \in (\Sigma_{n}+\Sigma_{n})\cap\Sigma$, and
\item $(\Sigma_{n}+\Sigma_{n})\cap\Sigma\subset\Sigma_{n}$.
\end{t_enumerate}
Then $\mathfrak{n}_{n}:=\bigoplus\nolimits_{\mu\in\Sigma_{n}}\mathfrak{g}_{\mu}$ is an $\mathfrak{h}_{\{\alpha_{1},\dots,\alpha_{n}\}}$-stable subalgebra of $\mathfrak{g}$. Moreover, if $X\in\mathfrak{g}$ is a regular element such that $G_{X}=\mathfrak{h}_{\{\alpha_{1},\dots,\alpha_{n}\}}$, then $\mathfrak{n}_{n}$ is $\omega_{X}$-isotropic subspace of $T_{X}(Ad(G)X)$ and hence induces a left-invariant isotropic foliation $\mathcal{N}_{n}$ of $Ad(G)X$ with $\left(  \mathcal{N}_{n}\right)  _{X}=\mathfrak{n}_{n}.$
\end{theorem}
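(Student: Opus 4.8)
The plan is to verify the three asserted properties in the order: first that $\mathfrak{n}_n$ is a subalgebra, then that it is $\mathfrak{h}_{\{\alpha_1,\dots,\alpha_n\}}$-stable, and finally that it is $\omega_X$-isotropic; the involutivity then gives the foliation by Frobenius, exactly as in the proof of Theorem \ref{thm:foliation}. The subalgebra property is essentially built into the defining conditions: since $\mathfrak{n}_n=\bigoplus_{\mu\in\Sigma_n}\mathfrak{g}_\mu$ and the restricted-root bracket relations give $[\mathfrak{g}_\mu,\mathfrak{g}_\nu]\subseteq\mathfrak{g}_{\mu+\nu}$, condition (3) that $(\Sigma_n+\Sigma_n)\cap\Sigma\subset\Sigma_n$ says precisely that the bracket of two root spaces in $\mathfrak{n}_n$ lands back in $\mathfrak{n}_n$ (or is zero when $\mu+\nu\notin\Sigma$). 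So the subalgebra claim is immediate from (3).

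Next I would establish $\mathfrak{h}_{\{\alpha_1,\dots,\alpha_n\}}$-stability. By Theorem \ref{lemma:hj-decomp}, the CSA $\mathfrak{h}_{\{\alpha_1,\dots,\alpha_n\}}$ decomposes as $\bigcap_{m=1}^n\ker\alpha_m|_{\mathfrak{h}_0}\oplus\bigoplus_{m=1}^n\mathbb{R}(E_{\alpha_m}+\theta E_{\alpha_m})$. The first summand lies in $\mathfrak{h}_0$ (indeed in $\mathfrak{a}\oplus\mathfrak{t}$), and for $H$ there and $\mu\in\Sigma_n$ we have $[H,\mathfrak{g}_\mu]\subset\mathfrak{g}_\mu\subset\mathfrak{n}_n$, so stability under that summand is automatic from the root-space grading. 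The substantive part is stability under each $E_{\alpha_m}+\theta E_{\alpha_m}$. Writing $\beta_m=\alpha_m|_{\mathfrak{a}}$, the vector $E_{\alpha_m}$ lies in the restricted root space $\mathfrak{g}_{\beta_m}$ and $\theta E_{\alpha_m}$ in $\mathfrak{g}_{-\beta_m}$; bracketing with $\mathfrak{g}_\mu$ therefore produces components in $\mathfrak{g}_{\mu+\beta_m}$ and $\mathfrak{g}_{\mu-\beta_m}$. Condition (1), which forces $\mu\pm\beta_m\in\Sigma_n$ whenever these are restricted roots, is exactly what makes these components land in $\mathfrak{n}_n$, giving the desired stability.

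The isotropy is where conditions (2) and (3) do their work, and I expect this to be the main obstacle to write cleanly. For $U,V\in\mathfrak{n}_n$ with $U\in\mathfrak{g}_\mu$, $V\in\mathfrak{g}_\nu$, we have $\omega_X(U,V)=B(X,[U,V])$ with $[U,V]\in\mathfrak{g}_{\mu+\nu}$. Because the restricted-root-space decomposition is $B$-orthogonal, this pairing vanishes unless $\mu+\nu=0$, i.e. unless $\nu=-\mu$. The element $X$ generates $\mathfrak{h}_{\{\alpha_1,\dots,\alpha_n\}}$, whose vector part is $\bigcap_{m=1}^n\ker\alpha_m\cap\mathfrak{a}$, so $X$ no longer lies in $\mathfrak{a}$ alone but has a toroidal component built from the $E_{\alpha_m}+\theta E_{\alpha_m}$; thus $B(X,[U,V])$ can be nonzero only when $[U,V]$ has a component pairing nontrivially with that toroidal part, which lives in $\mathfrak{g}_{\pm\beta_m}$. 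This is precisely the scenario that condition (2), $\pm\beta_m\notin(\Sigma_n+\Sigma_n)\cap\Sigma$, is designed to exclude: it guarantees that no sum $\mu+\nu$ of roots from $\Sigma_n$ equals $\pm\beta_m$, so $[U,V]$ has no component along the toroidal directions of $X$. Combined with $B$-orthogonality against the vector part $\mathfrak{a}$ (which kills the $\mu+\nu=0$ contribution since $X$'s $\mathfrak{a}$-part annihilates $[U,V]\in\mathfrak{g}_0$ only via a pairing that vanishes by the same grading argument as in Theorem \ref{thm:foliation}), this forces $\omega_X(U,V)=0$. The delicate bookkeeping is to track exactly which component of $X$—its $\mathfrak{a}$-part versus its $E_{\alpha_m}+\theta E_{\alpha_m}$ toroidal contributions—could detect $[U,V]$, and to see that conditions (2) and (3) between them rule out every such detection; once this is laid out, involutivity and the Frobenius theorem produce the foliation $\mathcal{N}_n$ with $(\mathcal{N}_n)_X=\mathfrak{n}_n$.
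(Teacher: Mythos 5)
Your proposal is correct and is essentially the paper's own argument: condition (3) together with $[\mathfrak{g}_{\mu},\mathfrak{g}_{\nu}]\subset\mathfrak{g}_{\mu+\nu}$ gives the subalgebra property, the decomposition of Theorem \ref{lemma:hj-decomp} together with condition (1) gives $\mathfrak{h}_{\{\alpha_{1},\dots,\alpha_{n}\}}$-stability, and isotropy follows by splitting $X=X_{\ker}+\sum_{m}c_{m}(E_{\alpha_{m}}+\theta E_{\alpha_{m}})$ and pairing $[U,V]$ against each piece, with condition (2) killing the toroidal contributions. The only clean-up needed is your parenthetical on the $\mu+\nu=0$ case, which is circular as written; that case simply never occurs, since $\mu,\nu\in\Sigma_{n}\subset\Sigma^{+}$ are both positive, so $[U,V]$ lies entirely in positive restricted root spaces and is automatically $B$-orthogonal to $X_{\ker}\in\mathfrak{g}_{0}$ (and note that condition (3) plays no role in the isotropy itself, only in closure under the bracket).
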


\begin{proof}
To show that $\mathfrak{n}_{n}$ is $\mathfrak{h}_{n}$-stable, first consider $X\in\bigcap\nolimits_{m=1}^{n}\ker\alpha_{m}\subset\mathfrak{h}_{0}$. Since $\mathfrak{h}_{0}\subset\mathfrak{g}_{0}$, we have $[X,\mathfrak{g}_{\mu }]\subset\mathfrak{g}_{\mu}$ for all restricted roots $\mu$, whence $[X,\mathfrak{n}_{n}]\subset\mathfrak{n}_{n}.$ Next, consider $X=E_{\alpha_{m}}+\theta E_{\alpha_{m}}$. Since $\alpha_{m}$ is real, $\theta E_{\alpha_{m}}\in\mathfrak{g}_{-\alpha_{m}}$, so $[X,\mathfrak{g}_{\mu}]\in\mathfrak{g}_{\mu+\alpha_{m}}\oplus\mathfrak{g}_{\mu-\alpha_{m}}$. By condition (1), we see that if $\mu\in\Sigma_{n}$ then $\mathfrak{g}_{\mu \pm\beta_{m}}\subset\mathfrak{n}_{n}$ which implies $[X,\mathfrak{g}_{\mu }]\subset\mathfrak{n}_{n}$. Since this holds for each $m=1,\dots,n$, the direct sum decomposition of Theorem \ref{lemma:hj-decomp} implies $[\mathfrak{h}_{n},\mathfrak{n}_{n}]\subset\mathfrak{n}_{n}$ as desired.

Next, we observe that since $[\mathfrak{g}_{\mu},\mathfrak{g}_{\nu}]\subset\mathfrak{g}_{\mu+\nu}$, condition (3) implies that $\mathfrak{n}_{n}$ is a subalgebra.

Finally, to see that $\mathfrak{n}_{n}$ is an $\omega_{X}$-isotropic subspace of $T_{X}(Ad_{G}X)\simeq\mathfrak{n}\oplus(\mathfrak{a}\oplus\mathfrak{k})/\mathfrak{h}_{n}$, let $U,V\in\mathfrak{n}_{n}.$ Let $X=X_{\ker}+\sum_{m=1}^{n}c_{m}(E_{\alpha_{m}}+\theta E_{\alpha_{m}})$ be the decomposition of $X$ according to Theorem \ref{lemma:hj-decomp}. Recall that $\mathfrak{g}_{\mu}$ is orthogonal to $\mathfrak{g}_{\nu}$ unless $\nu=\pm\mu$. Thus, since $X_{\ker}\in\mathfrak{g}_{0}$, we have $B(X,[\mathfrak{n}_{n},\mathfrak{n}_{n}])=0.$ Moreover, condition (2) insures that $\mathfrak{g}_{\pm\beta_{m}}\perp\mathfrak{n}_{n}$ which implies $E_{\alpha_{m}}+\theta E_{\alpha_{m}}\perp\lbrack\mathfrak{n}_{n},\mathfrak{n}_{n}]$ for each $m=1,\dots,n$ whence we conclude that $\omega_{X}(U,V)=B(X,[U,V])=0.$
\end{proof}

\bigskip
\noindent\textbf{Remarks.}
\begin{t_enumerate}
\item It may happen that $\mathfrak{n}_{j}=\{0\}$, in which case the isotropic foliation is of $\Omega$ is just the trivial one in which each leaf is a single point. 
\item The construction of $\mathfrak{h}_{\mathbf{F}}$ from $\mathfrak{h}_{0}$ does not depend on the order in which the Cayley transforms associated to $\{\alpha_{1},\dots,\alpha_{j}\}$ are applied to $\mathfrak{h}_{0}$. On the other hand, it might be possible that one can choose a an order which maximizes the dimension of the the resulting isotropic nilpotent subalgebra foliation (we do not know). But even then it is sometimes unavoidable that $\mathfrak{n}_{j}=0$. This happens, for example, for coadjoint orbits in $\mathfrak{sl}(2,\mathbb{R)}$ with maximally compact stabilizers. On the other hand, as we will see in the next section, we \emph{can} sometimes guarantee that the left-invariant isotropic foliation of Theorem \ref{thm:general case} is nontrivial.
\end{t_enumerate}

\subsubsection{Type-$\mathbf{\{\alpha\}}$ orbits.\label{subsec:type-1}}

If $\mathfrak{g}$ admits more than one conjugacy class of CSAs, and if $\mathfrak{g}$ is not too small, then we can show that when $\alpha$ is a simple positive root, type-$\{\alpha\}$ regular orbits admit nontrivial left-invariant isotropic foliations. The reason is that if the number of positive restricted roots is greater than $1$, when the restricted root system is reduced,\footnote{Recall that a root system is \emph{reduced} if for each restricted root $\mu$, $\frac{1}{2}\mu$ is not a root.} or greater than $2$, when the restricted root system is not reduced, it will follow that $\Sigma_{1}$ is nonempty ($\Sigma_{1}$ is the subset of positive restricted roots appearing in Theorem \ref{thm:general case} which controls the left-invariant isotropic foliation of type-$\{\alpha\}$ orbits). Hence, $\Sigma_{1}$ induces a nontrivial left-invariant isotropic foliation. 

We first show that when there is more than one conjugacy class of Cartan subalgebras, positive real roots always exist, and hence there are always nonmaximally compact orbits of type $\{\alpha\}$, for $\alpha>0$ simple.

\begin{lemma}
\label{lemma:alpha1-simple} Suppose there exists a real root $\beta$. Then there exists at least one positive simple real root $\alpha$.
\end{lemma}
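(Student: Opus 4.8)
The plan is to produce the simple real root by a height descent on the set of positive real roots, so I first record the structural facts that make real roots tractable relative to the fixed Iwasawa-compatible data. A root $\gamma$ is real exactly when $H_{\gamma}\in\mathfrak{a}$, equivalently when $\gamma$ vanishes on $\mathfrak{t}=\mathfrak{h}_{0}^{+}$, so real roots lie in the subspace $\mathfrak{a}^{\ast}\subset(\mathfrak{h}_{0}^{\mathbb{C}})^{\ast}$. Since the Cartan decomposition is $B$-orthogonal, $\mathfrak{a}^{\ast}\perp\mathfrak{t}^{\ast}$, so every real root is orthogonal to every imaginary root (those vanishing on $\mathfrak{a}$). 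Moreover $-\theta$ is an isometry of $(\mathfrak{h}_{0}^{\mathbb{C}})^{\ast}$ that permutes $\Delta$, fixes $\mathfrak{a}^{\ast}$ pointwise and acts as $-1$ on $\mathfrak{t}^{\ast}$; the real roots are precisely its fixed roots, and they are stable under the reflections $s_{\gamma}$, $\gamma$ real, hence form a (reduced) sub-root-system $\Delta_{\mathbb{R}}\subset\Delta$ inside $\mathfrak{a}^{\ast}$. Finally, because $-\theta$ preserves restrictions to $\mathfrak{a}$, it carries a positive non-imaginary root to a positive root.

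Next I would let $\beta$ be a positive real root of minimal height and try to show $\beta$ is simple, which would immediately give the desired positive simple real root. Suppose $\beta$ is not simple; then it decomposes as $\beta=\mu+\nu$ with $\mu,\nu\in\Delta^{+}$. Applying the isometry and using $-\theta\beta=\beta$ gives $\beta=(-\theta\mu)+(-\theta\nu)$, while restriction to $\mathfrak{a}$ gives $\overline{\beta}=\overline{\mu}+\overline{\nu}$ with $\overline{\mu},\overline{\nu}\geq 0$. The favorable outcome is that one summand, say $\mu$, is real: then $\mu$ is a positive real root with $\operatorname{ht}(\mu)<\operatorname{ht}(\beta)$, contradicting minimality. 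Equivalently, in the inner-product formulation, $\langle\beta,\beta\rangle>0$ forces a simple root $\alpha$ with $\langle\beta,\alpha\rangle>0$ (such $\alpha$ is not imaginary, as imaginary roots are orthogonal to $\beta$), hence $\beta-\alpha\in\Delta^{+}$; and $\langle\beta,-\theta\alpha\rangle=\langle-\theta\beta,\alpha\rangle=\langle\beta,\alpha\rangle>0$. Here $\beta-\alpha$ is real precisely when $\theta\alpha=-\alpha$, i.e.\ when $\alpha$ itself is real, in which case we are done directly. Thus the whole argument reduces to a single claim: every non-simple positive real root admits a decomposition into two positive roots one of which is real (equivalently, a simple root $\alpha$ with $\langle\beta,\alpha\rangle>0$ that is real).

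I expect this last claim to be the genuine obstacle, because it is exactly where the statement becomes delicate. The obstruction is the non-reduced (``doubling'') configuration $\nu=-\theta\mu$ with $\mu$ non-real, which forces $\overline{\mu}=\overline{\nu}$ and $\beta=\mu+(-\theta\mu)=2\overline{\mu}$. Then $\overline{\mu}=\tfrac12\beta$ is a restricted root, but since $\Delta(\mathfrak{g}^{\mathbb{C}},\mathfrak{h}_{0}^{\mathbb{C}})$ is reduced we have $\tfrac12\beta\notin\Delta$, so $\overline{\mu}$ is not a real root, no shorter real root is produced, and the minimal real root $\beta$ is genuinely non-simple. This is precisely the $BC_{q}$ situation, e.g.\ $\mathfrak{su}(p,q)$ with $p>q$, where the real roots are the lifts of the long restricted roots $2e_{i}$ and are never simple in $\Delta$. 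Consequently the clean statement should require the doubling case to be excluded---for instance that the restricted root system be reduced, or that attention be restricted to simple roots $\alpha$ with $2\overline{\alpha}\notin\Sigma$---and the heart of the write-up will be isolating this hypothesis and verifying that under it the descent of the second paragraph always yields a real summand, forcing the minimal positive real root to be simple.
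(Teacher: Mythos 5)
You have in fact uncovered a genuine error, but it is in the paper, not in your analysis. The paper's own proof is exactly the naive support argument your proposal implicitly rejects: it writes $\beta=\sum_{\alpha\in\Pi}c_{\alpha}\alpha$ with $c_{\alpha}\geq0$ and asserts that $\beta(\mathfrak{t})=0$ forces $\alpha(\mathfrak{t})=0$ for every $\alpha$ in the support. Since roots take purely imaginary values on $\mathfrak{t}$, the summands $c_{\alpha}\,\alpha|_{\mathfrak{t}}$ have no definite sign and can cancel, so this step is unjustified; and your ``doubling'' configuration $\beta=\mu+(-\theta\mu)=\mu+\bar{\mu}$ (note that $-\theta\mu=\bar{\mu}$ on roots) is precisely how it fails. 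Concretely, take $\mathfrak{g}=\mathfrak{su}(2,1)$, with $\Sigma=(BC)_{1}$, $\mathfrak{a}=\mathbb{R}(E_{13}+E_{31})$ and $\mathfrak{t}=\mathbb{R}\,i\,\mathrm{diag}(1,-2,1)$. Diagonalizing $\mathfrak{h}_{0}^{\mathbb{C}}$, a general element has entries $(t+is,\,-2is,\,-t+is)$, the paper's compatible positive system is $\{\varepsilon_{1}-\varepsilon_{2},\ \varepsilon_{2}-\varepsilon_{3},\ \varepsilon_{1}-\varepsilon_{3}\}$, the simple roots $\alpha_{1}=\varepsilon_{1}-\varepsilon_{2}$ and $\alpha_{2}=\varepsilon_{2}-\varepsilon_{3}=\bar{\alpha}_{1}$ are complex (their $\mathfrak{t}$-values $\pm3is$ cancel), and the unique positive real root is $\beta=\alpha_{1}+\bar{\alpha}_{1}=\varepsilon_{1}-\varepsilon_{3}$, lying over the long restricted root $2\lambda$ and not simple. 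Real roots do exist here ($|\mathbf{F}|_{\max}=1$ in the paper's own table for $\mathfrak{su}(p,q)$), so the lemma as stated is false under the paper's standing conventions, and your conclusion that a reducedness hypothesis on $\Sigma$ (excluding the $(BC)$ cases) must be added is correct.

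Two further remarks. First, the paper's computation can be repaired by choosing positivity lexicographically with $i\mathfrak{t}$ \emph{before} $\mathfrak{a}$: then each $c_{\alpha}\alpha|_{i\mathfrak{t}}$ is lexicographically nonnegative and a vanishing sum forces each term to vanish, which is presumably the argument the author had in mind. But that ordering violates the convention, fixed in Section 2.1 and used essentially in Theorem 3.7, that positive roots restrict to positive restricted roots (in the example above the $i\mathfrak{t}$-first order makes $\varepsilon_{1}-\varepsilon_{3}$ simple, but places $\varepsilon_{3}-\varepsilon_{2}$, of restriction $-\lambda$, in $\Delta^{+}$). Moreover the problem propagates: in the $(BC)$ cases the real roots lie over the long restricted roots $2e_{i}$, which are never simple in $\Sigma$, so the hypothesis ``$\alpha$ simple'' of Theorem 3.7 can be unsatisfiable for the orbit types the lemma was meant to supply, not merely harder to verify.

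Second, judged as a proof, your proposal is incomplete where you say it is: it establishes the obstruction but defers the reduced case. That case does close along your lines, and you should finish it. With $\beta$ a positive real root of minimal height and $\alpha$ simple with $\langle\beta,\alpha\rangle>0$ (such $\alpha$ exists and is not imaginary, since imaginary roots are orthogonal to real ones), either $\alpha$ is real and you are done, or $\alpha$ is complex, in which case $\langle\beta,\bar{\alpha}\rangle=\langle\bar{\beta},\alpha\rangle=\langle\beta,\alpha\rangle>0$ as you note, $\beta\neq\alpha+\bar{\alpha}$ by reducedness of $\Sigma$, and one aims to produce the bar-fixed (hence real) element $\beta-\alpha-\bar{\alpha}$ of height $\mathrm{ht}(\beta)-2$, contradicting minimality. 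Be careful with the corner cases: $\beta-\alpha-\bar{\alpha}$ need not be a root when $\langle\alpha,\bar{\alpha}\rangle>0$ (then $\bar{\alpha}-\alpha$ is a positive imaginary root and a separate argument via root strings is needed), and when $\mathrm{ht}(\beta)=2$ one should argue directly that the two simple summands are swapped or fixed by conjugation, the swapped case being exactly the doubling excluded by reducedness. Isolating and handling these cases is the remaining content of a correct write-up.
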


\begin{proof}
If $\beta$ is real, then so is $-\beta$, so we may assume without loss of generality that $\beta$ is positive. Decompose $\beta$ into simple positive simple roots
\begin{equation}
\beta=\sum_{\alpha\in\Pi}c_{\alpha}\alpha. \label{eqn:beta_decomp}
\end{equation}
Since $\beta$ is positive, all of the coefficients $c_{\alpha}$ are positive. Since $\beta$ is real, $\beta(\mathfrak{t})=0$ which implies $\alpha(\mathfrak{t})=0$ for each $\alpha$ with $c_{\alpha}\neq0$, that is, each $\alpha$ appearing in the decomposition (\ref{eqn:beta_decomp}) is also real.
\end{proof}

\bigskip
\begin{theorem}
\label{thm:h1iso}
Assume that $\alpha$ is simple, and let $\beta$ denote the restriction to $\mathfrak{a}$ of $\alpha.$ If the restricted root system $\Sigma$ is reduced, let $\Sigma_{1}=\Sigma^{+}\setminus\{\beta\}$. If $\beta$ and $2\beta$ are restricted roots, let $\Sigma_{1}=\Sigma^{+}\setminus \{\beta,2\beta\}$. Define $\mathfrak{n}_{1}:=\bigoplus\nolimits_{\mu\in \Sigma_{1}}\mathfrak{g}_{\mu}\subset\mathfrak{n}$. If $\Omega=Ad(G)X$ is type-$\{\alpha\}$, then it admits a left-invariant isotropic foliation $\mathcal{N}_{1}$ with $\left(  \mathcal{N}_{1}\right)  _{X}=\mathfrak{n}_{1}.$
\end{theorem}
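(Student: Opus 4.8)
The plan is to realize Theorem \ref{thm:h1iso} as a special case of the general machinery of Theorem \ref{thm:general case} applied to the singleton admissible root system $\mathbf{F}=\{\alpha\}$. For $|\mathbf{F}|=1$ the recursive definition collapses to a single step: I need to exhibit the explicitly-defined set $\Sigma_{1}$ (either $\Sigma^{+}\setminus\{\beta\}$ in the reduced case or $\Sigma^{+}\setminus\{\beta,2\beta\}$ in the non-reduced case) as a legitimate choice of the set $\Sigma_{1}$ produced by the three conditions of Theorem \ref{thm:general case} with $n=1$. Once that is verified, the conclusion---that $\mathfrak{n}_{1}=\bigoplus_{\mu\in\Sigma_{1}}\mathfrak{g}_{\mu}$ is an $\mathfrak{h}_{\{\alpha\}}$-stable isotropic subalgebra inducing a left-invariant isotropic foliation with $(\mathcal{N}_{1})_{X}=\mathfrak{n}_{1}$---follows immediately from that theorem, with no further symplectic computation required.

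Concretely, with $\Sigma_{0}=\Sigma^{+}$ and only the single root $\beta$ in play, I would check the three defining properties directly. Condition (2) reads $\pm\beta\notin(\Sigma_{1}+\Sigma_{1})\cap\Sigma$: since every element of $\Sigma_{1}$ is a positive restricted root, any sum of two of them lies strictly higher in the root ordering than any single positive root, so $+\beta$ cannot be such a sum; the case $-\beta$ is excluded because sums of positive roots are positive. (In the non-reduced case one must also prevent $\beta$ from arising as $\mu+(\beta-\mu)$-type cancellations, which is why $2\beta$ is also removed, and I would note that $\beta=\mu_{1}+\mu_{2}$ with $\mu_{i}\in\Sigma_{1}$ is impossible by positivity/height.) Condition (3), that $(\Sigma_{1}+\Sigma_{1})\cap\Sigma\subset\Sigma_{1}$, amounts to checking that a sum of two roots in $\Sigma_{1}$ that happens to be a root cannot equal $\beta$ (reduced case) or $\beta$ or $2\beta$ (non-reduced case); again height/positivity handles $\beta$, and in the non-reduced case $2\beta=\mu_{1}+\mu_{2}$ would force one of the $\mu_{i}$ to be $\beta$ or $2\beta$ (using that $\beta$ is the only restricted root whose double is a root and invoking the structure of rank-one restricted subsystems), contradicting membership in $\Sigma_{1}$. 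Condition (1), the $\mu\pm\beta$ closure, is the most delicate and is where I expect to spend real effort: I must verify that removing exactly $\{\beta\}$ (or $\{\beta,2\beta\}$) leaves a set closed under adding/subtracting $\beta$ whenever the result is a root, i.e. that no $\mu\in\Sigma_{1}$ has $\mu\pm\beta\in\{\beta,2\beta\}$.

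The main obstacle, then, is condition (1) in the non-reduced case, where the $\beta$-string through a restricted root can have length greater than one and the relations $\mu+\beta=2\beta$ (forcing $\mu=\beta\notin\Sigma_{1}$, fine) or $\mu-\beta=-$something must all be ruled out carefully using the rank-one reduction: the subsystem of restricted roots proportional to, or connected by $\beta$-strings to, $\beta$ is of type $A_{1}$ or $BC_{1}$, and I would invoke the classification of such strings to confirm that the only roots whose $\beta$-translates leave $\Sigma_{1}$ are precisely the excised ones. The nontriviality claim $\mathfrak{n}_{1}\neq\{0\}$, i.e. $\Sigma_{1}\neq\emptyset$, is guaranteed by the hypothesis that $G$ is ``not too small''---precisely, that $|\Sigma^{+}|>1$ in the reduced case or $|\Sigma^{+}|>2$ in the non-reduced case---which I would state explicitly as the meaning of the size hypothesis. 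Finally, Lemma \ref{lemma:alpha1-simple} supplies the existence of a simple positive real root $\alpha$ whenever a real root exists, ensuring type-$\{\alpha\}$ orbits genuinely occur, so that the theorem is not vacuous.
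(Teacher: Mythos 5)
Your proposal is correct and takes essentially the same route as the paper: the paper also proves Theorem \ref{thm:h1iso} by exhibiting the explicit set $\Sigma_{1}$ (with $\beta$, and $2\beta$ in the non-reduced case, removed) as satisfying conditions (1)--(3) of Theorem \ref{thm:general case}, so that the foliation statement follows with no further symplectic work. The only difference is at the micro level of condition (1): you propose a rank-one/$\beta$-string classification, whereas the paper needs nothing of the sort --- writing $\mu$ as a nonnegative combination of simple restricted roots, the simplicity of $\beta$ already forces $\mu\pm\beta$ (whenever it is a root) to be positive and different from $\beta$ and $2\beta$, which is exactly the closure you need and is the same height/positivity argument you already use for conditions (2) and (3).
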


\begin{proof}
First, we remark that it is easy to see that since $\beta$ is simple, $\mathfrak{n}_{1}$ is a subalgebra. Thus, we need only show that $\Sigma_{1}$ satisfies properties (1)--(3) of Theorem \ref{thm:general case}.

For the first property, let $\mu\in\Sigma_{1}.$ Since $\beta$ is a positive simple root, if $\mu+\beta$ is a root then it is a positive root, whence $\mu+\beta\in\Sigma_{1}$ since $\mu+\beta\neq\beta$ and $\mu+\beta=2\beta$ only if $\mu=\beta\not \in \Sigma_{1}$. Next, decompose $\mu$ into simple positive roots $\mu=c_{\beta}\beta+\sum_{\nu\in\Sigma^{+}\backslash\{\beta\}}c_{\nu}\nu,$ where $c_{\nu}\geq0$ for all $\nu\in\Sigma^{+}.$ If $\mu-\beta$ is a root, then it can be decomposed as a sum over positive simple roots, and the coefficients all have the same sign, whence it must be positive since $\mu$ is positive. In particular, the coefficient $c_{\beta}^{\prime}$ of $\beta$ is $\geq0.$ This means that $\mu=(c_{\beta}-1)\beta+\sum_{\nu \in\Sigma^{+}\backslash\{\beta\}}c_{\nu}\nu$ for some $c_{\beta}\geq1$, whence $\mu-\beta$ is positive and $\neq0,\beta$ (since $\mu\neq\beta,2\beta$), that is, $\mu-\beta\in\Sigma_{1}$.

Property (2) is a consequence of the fact that $\beta$ is a positive simple root, and thus cannot be expressed as a positive linear combination of positive roots.

Property (3) is also a consequence of the simplicity of $\beta.$ If $\mu, \nu\in\Sigma_{1}$ such that $\mu+\nu$ is a restricted root, then $\mu+\nu$ is a positive restricted root (since $\mu,\nu$ are both positive), though not a simple one. Hence $\mu+\nu\neq\beta.$ Of course, $\mu+\nu$ cannot equal $2\beta$ either, since otherwise $\mu=\nu=\beta\not \in \Sigma_{1}.$
\end{proof}

\bigskip
\noindent\textbf{Remarks.}
\begin{t_enumerate}
\item If $|\Sigma^{+}|>1$ and $2\beta$ is not a restricted root, then $\Sigma^{+}\setminus\{\beta\}$ is nonempty and $\dim\mathfrak{n}_{2}>0$. If $2\beta$ is a restricted root, then we must have $|\Sigma^{+}|>2$ in order to insure that $\dim\mathfrak{n}_{1}>0$. Either way, it only happens that $\dim\mathfrak{n}_{1}=0$ for small rank Lie algebras, for example $\mathfrak{sl}(2,\mathbb{R})$.
\item On the other hand, $\dim\mathfrak{n}_{1}<\dim\mathfrak{n}$ since we must, at the very least, ``remove'' $\alpha$ from $\mathfrak{n}$ to insure that the resulting subalgebra is $\mathfrak{h}_{\{\alpha\}}$-stable. To see why, note that $E_{\alpha}+\theta E_{\alpha}\in\mathfrak{h}_{\{\alpha\}}$ implies $[E_{\alpha}+\theta E_{\alpha},E_{\alpha}]=[\theta E_{\alpha},E_{\alpha}]\propto H_{\alpha}\in\lbrack\mathfrak{h}_{\{\alpha\}},\mathfrak{n]}$. But $H_{\alpha}\not \in \mathfrak{h}_{\{\alpha\}} \oplus\mathfrak{n}$ by construction.
\end{t_enumerate}

\section{Rulings\label{sec:rulings}}

Our last result is that the leaves of the left-invariant isotropic foliations of the previous section are affine subspaces of $\mathfrak{g}$. A surface in $\mathbb{R}^{3}$ which is foliated by lines (that is, by actual affine subspaces of $\mathbb{R}^{3},$ not just by $1$-manifolds) is called a \emph{ruled} surface, and the foliation itself is called a \emph{ruling} (a complex surface foliated by complex lines is also said to be \emph{ruled}); for example, the left-invariant isotropic foliations of the maximally noncompact regular orbits in $\mathfrak{sl}(2,\mathbb{R})$ are the classical rulings of the $1$-sheeted hyperboloid. We will say that a foliation of a manifold $M\subset\mathbb{R}^{N}$ is an $n$\emph{-ruling} if the leaves are $n$-dimensional affine subspaces.

\begin{theorem}
Let $\Omega$ be a regular orbit in $\mathfrak{g}$, say of type $\mathbf{F}=\{\alpha_{1},\dots,\alpha_{j}\}.$ The left-invariant isotropic foliation of $\Omega$ of Theorem \ref{thm:general case} (or \ref{thm:foliation}, if $\Omega$ is maximally noncompact) is a $\dim\mathfrak{n}_{j}$-ruling, where $\mathfrak{n}_{j}:=\bigoplus\nolimits_{\mu\in\Sigma_{j}}\mathfrak{g}_{\mu}$ is the nilpotent subalgebra associated to $\mathbf{F}$ constructed in \ref{thm:general case}.
\end{theorem}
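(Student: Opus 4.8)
The plan is to prove that the leaf of $\mathcal{N}_{j}$ through $X$ is exactly the affine subspace $X+\mathfrak{n}_{j}$, and then to deduce the general case from left-invariance. Since the foliation is $G$-invariant, every leaf has the form $Ad_{g}(L)$, where $L$ is the leaf through $X$ and $Ad_{g}$ is a linear automorphism of $\mathfrak{g}$; as linear maps send affine subspaces to affine subspaces, it suffices to treat $L$. The leaf $L$ is the orbit $Ad_{N_{j}}X$, where $N_{j}$ is the (simply connected) nilpotent analytic subgroup with Lie algebra $\mathfrak{n}_{j}$.

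First I would establish the containment $L\subseteq X+\mathfrak{n}_{j}$. By the hypotheses of Theorem \ref{thm:general case} the stabilizer algebra of $X$ is $\mathfrak{h}_{\{\alpha_{1},\dots,\alpha_{j}\}}$, so $X\in\mathfrak{h}_{\{\alpha_{1},\dots,\alpha_{j}\}}$; and that same theorem shows $[\mathfrak{h}_{\{\alpha_{1},\dots,\alpha_{j}\}},\mathfrak{n}_{j}]\subseteq\mathfrak{n}_{j}$, whence $[X,\mathfrak{n}_{j}]\subseteq\mathfrak{n}_{j}$. For $U\in\mathfrak{n}_{j}$ we then have $[U,X]\in\mathfrak{n}_{j}$, and since $\mathfrak{n}_{j}$ is a subalgebra, $(\mathrm{ad}_{U})^{k}X\in\mathfrak{n}_{j}$ for every $k\geq1$. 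Because $\mathfrak{n}_{j}$ is nilpotent the operator $\mathrm{ad}_{U}$ is nilpotent, so the exponential series terminates and $Ad_{\exp U}X=e^{\mathrm{ad}_{U}}X=X+\sum_{k\geq1}\tfrac{1}{k!}(\mathrm{ad}_{U})^{k}X\in X+\mathfrak{n}_{j}$. As $\exp\colon\mathfrak{n}_{j}\to N_{j}$ is surjective, this yields $L\subseteq X+\mathfrak{n}_{j}$.

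Next I would show that $L$ is open in the affine flat $X+\mathfrak{n}_{j}$. The differential of the orbit map $U\mapsto Ad_{\exp U}X$ at the identity is $V\mapsto[V,X]=-\mathrm{ad}_{X}V$, and $\mathrm{ad}_{X}$ is injective on $\mathfrak{n}_{j}$: indeed $\mathfrak{n}_{j}\cap\mathfrak{g}_{X}=\mathfrak{n}_{j}\cap\mathfrak{h}_{\{\alpha_{1},\dots,\alpha_{j}\}}=\{0\}$, since $\mathfrak{n}_{j}$ consists of positive restricted-root vectors, while by the decomposition of Theorem \ref{lemma:hj-decomp} any nonzero element of $\mathfrak{h}_{\{\alpha_{1},\dots,\alpha_{j}\}}$ has a nonzero component either in $\mathfrak{g}_{0}$ or in some negative restricted-root space $\mathfrak{g}_{-\beta_{m}}$ (arising from the $\theta E_{\alpha_{m}}$ summands, whose images under projection to $\bigoplus_{\mu<0}\mathfrak{g}_{\mu}$ are independent because the $\alpha_{m}$ are distinct roots). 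Thus $\mathrm{ad}_{X}\colon\mathfrak{n}_{j}\to\mathfrak{n}_{j}$ is an isomorphism, so $[\mathfrak{n}_{j},X]=\mathfrak{n}_{j}$; by $N_{j}$-homogeneity the orbit map has constant full rank $\dim\mathfrak{n}_{j}$, and by the inverse function theorem it is a local diffeomorphism onto its image, making $L$ open in $X+\mathfrak{n}_{j}$.

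Finally I would argue $L=X+\mathfrak{n}_{j}$. The hard part is exactly this global surjectivity, since openness alone does not force equality. Here I would invoke that $N_{j}$ is a unipotent group acting algebraically on the affine space $\mathfrak{g}$, so by the Kostant--Rosenlicht theorem its orbit $L$ is (Zariski-)closed in $\mathfrak{g}$, hence closed in $X+\mathfrak{n}_{j}$. A nonempty subset of the connected affine space $X+\mathfrak{n}_{j}$ that is both open and closed must be the whole space, giving $L=X+\mathfrak{n}_{j}$, an affine subspace of dimension $\dim\mathfrak{n}_{j}$. With the reduction of the first paragraph, every leaf is then a $\dim\mathfrak{n}_{j}$-dimensional affine subspace, i.e. the foliation is a $\dim\mathfrak{n}_{j}$-ruling. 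I expect the closedness/surjectivity step to be the only genuine obstacle, the containment and openness following directly from the $\mathfrak{h}_{\mathbf{F}}$-stability and regularity already in hand. (For type-$0$ orbits, where $X\in\mathfrak{g}_{0}$ so that $\mathrm{ad}_{X}$ preserves each restricted-root space and the height grading, one can instead verify surjectivity by an explicit triangular computation along a height filtration of $\mathfrak{n}_{j}$, bypassing Kostant--Rosenlicht entirely.)
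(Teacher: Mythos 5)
Your proposal is correct, and its skeleton is the same as the paper's proof: identify the leaf through $X$ as the orbit $Ad_{N_{j}}X$, prove $Ad_{N_{j}}X=X+\mathfrak{n}_{j}$, and then transport this leaf around $\Omega$ using the fact that each $Ad_{g}$ is a linear map (so sends affine subspaces to affine subspaces). Your containment step is the identical computation: $\mathfrak{h}_{\mathbf{F}}$-stability gives $[Y,X]\in\mathfrak{n}_{j}$, the subalgebra property gives $ad_{Y}^{k}X\in\mathfrak{n}_{j}$ for all $k\geq 1$, and nilpotency truncates the exponential series, so $Ad_{\exp Y}X\in X+\mathfrak{n}_{j}$. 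The divergence is at the final, surjectivity step. The paper concludes equality directly from the dimension count: since $\mathfrak{n}_{j}\cap\mathfrak{h}_{j}=\{0\}$, one has $\dim Ad_{N_{j}}X=\dim\mathfrak{n}_{j}$, ``from which it follows that'' $Ad_{N_{j}}X=X+\mathfrak{n}_{j}$. As you correctly observe, equal dimension only makes the orbit \emph{open} in the affine flat $X+\mathfrak{n}_{j}$, and an open connected subset need not be everything; some closedness input is genuinely required. You supply it via the Kostant--Rosenlicht theorem: $Ad(N_{j})\subset GL(\mathfrak{g})$ is a unipotent algebraic group acting polynomially on $\mathfrak{g}$, so its orbits are Zariski closed, and open-plus-closed-plus-connected forces equality. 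So your argument is not merely parallel to the paper's but is more complete at exactly the point you flagged as the real obstacle; the paper's one-line conclusion implicitly relies on such a closedness fact (or on a substitute such as the height-filtration triangularity you sketch, which, as you note, is transparent only in the type-$0$ case where $X\in\mathfrak{g}_{0}$, since for $|\mathbf{F}|>0$ the components $\theta E_{\alpha_{m}}\in\mathfrak{g}_{-\beta_{m}}$ of $X$ spoil the grading argument).
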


\begin{proof}
Let $X\in\Omega$ such that $G_{X}=\mathfrak{h}_{j}$ and $\mathfrak{n}_{j}:=\left(  \mathcal{N}_{j}\right)  _{X}=\oplus_{\mu\in\Sigma_{j}}\mathfrak{g}_{\mu}$ is the subalgebra given by Theorem \ref{thm:general case} of the nilpotent subalgebra $\mathfrak{n}$ of the Iwasawa decomposition. We will first show that the leaf through $X$ is $X+\mathfrak{n}_{j}.$ The result will then follow by left translation of this leaf since $Ad_{g}$ is linear.

To see that the leaf through $X$ is $X+\mathfrak{n}_{j},$ note that since $\mathfrak{n}_{j}$ is a nilpotent subalgebra of $\mathfrak{n}$, there exists an analytic subgroup $N_{j}$ of $G$ which is nilpotent and simply connected, whence the exponential map is a diffeomorphism $\exp:\mathfrak{n}_{j}\rightarrow N_{j}$. We will show that $Ad(N_{j})X=X+\mathfrak{n}_{j}.$ Let $y\in N_{j}.$ Then $y=\exp Y$ for some $Y\in\mathfrak{n}_{j}$. Since $\mathfrak{n}_{j}$ is $\mathfrak{h}_{j}$-stable, we must have $ad(Y)X=[Y,X]\in \mathfrak{n}_{j}.$ Since $\mathfrak{n}_{j}$ is a subalgebra, it follows that $ad_{Y}^{n}X\in\mathfrak{n}_{j}$ for all $n\geq1$. Hence we have
\[
Ad_{y}X=e^{ad_{Y}}X=X+\sum_{n=1}^{\infty}\frac{1}{n!}ad_{Y}^{n}X\in X+\mathfrak{n}_{j}.
\]
(Since $\mathfrak{n}_{j}$ is nilpotent, the sum above is actually finite.) This shows that $Ad_{N_{j}}X\subset X+\mathfrak{n}_{j}.$ On the other hand, since $\mathfrak{n}_{j}\cap\mathfrak{h}_{j}=\{0\}$, we must have $\dim Ad_{N_{j}}X=\dim\mathfrak{n}_{\mathfrak{j}}$ from which it follows that $Ad_{N_{j}}X=X+\mathfrak{n}_{j}.$
\end{proof}

\section*{Acknowledgements}

The author would like to thank A. Caine for several useful discussion during the early phase of this work, and in particular for pointing out that the foliations of type-$0$ orbits in $SL(2,R)$ are the classical rulings of the hyperboloid.

\section*{Appendix: Nilpotent foliation data for simple Lie algebras}

Figure \ref{fig:cplxLAdata} shows the data for simple complex Lie algebras which is relevant to the foliations described in Theorems \ref{thm:foliation} and \ref{thm:general case}, and Figure \ref{fig:LAdata} shows the data for simple real Lie algebras. Note that the simple complex Lie algebras are regarded as real Lie algebras; in particular, the dimensions are \emph{real} dimensions.

The first column of Figure \ref{fig:cplxLAdata} gives the name of the Lie algebra $\mathfrak{g}$ using the conventions of \cite[Sec. I.8]{Knapp}. The second and third columns give the real dimension and the rank (resp.) of the algebra (recall that the rank is the dimension of any CSA). Since the stabilizer algebra of a regular orbit is a CSA, the difference of the second and third columns gives the dimension of the regular orbits in $\mathfrak{g}$, which is recorded in the fourth column. For complex simple Lie algebras, the isotropic foliation induced by the Iwasawa decomposition is a Lagrangian fibration, and so the leaves have half the dimension of the orbit.

\begin{figure}[th]
{\small
\[%
\begin{array}
[c]{|l|c|c|c|}\hline
\makebox[.8in]{$\mathfrak{g}$} & \dim_{\mathbb{R}}\mathfrak{g} & \operatorname*{rank}\mathfrak{g} & \dim\Omega\\\hline\hline
A_{n}:\mathfrak{sl}(n+1,\mathbb{C)} & 2n(n+2) & 2n & 2n(n+1)\\\hline
B_{n}:\mathfrak{so}(2n+1,\mathbb{C}) & 2n(2n+1) & 2n & 4n^{2}\\\hline
C_{n}:\mathfrak{sp}(n,\mathbb{C}) & 2n(2n+1) & 2n & 4n^{2}\\\hline
D_{n}:\mathfrak{so}(2n+1,\mathbb{C}) & 2n(2n-1) & 2n & 4n(n-1)\\\hline
E_{6} & 2\times78=156 & 12 & 144\\\hline
E_{7} & 2\times133=266 & 14 & 252\\\hline
E_{8} & 2\times248=496 & 16 & 480\\\hline
F_{4} & 2\times52=104 & 8 & 96\\\hline
G_{2} & 2\times14=28 & 4 & 24\\\hline
\end{array}
\]
}\caption{Orbit and foliation data for simple complex Lie algebras.}
\label{fig:cplxLAdata}
\end{figure}

Figure \ref{fig:LAdata} below gives data for (noncomplex) simple real Lie algebras. The real rank $\mathbb{R}$-$\operatorname*{rank}\mathfrak{g}$, recorded in the fifth column, is the dimension of a maximal abelian subspace in the noncompact component $\mathfrak{p}$ of a Cartan decomposition $\mathfrak{g}=\mathfrak{k}\oplus\mathfrak{p}$. If the rank of $\mathfrak{g}$ is equal to the real rank of $\mathfrak{g}$, then a maximally noncompact $\theta$-stable CSA is a maximal abelian subspace of $\mathfrak{p}$, whence $\mathfrak{g}$ is a split-real form, and Theorem \ref{thm:splitrealLag} is valid. The sixth column gives the the cardinality of a maximal set of strongly orthogonal real roots, which is equal to the difference in noncompact dimensions of a maximally noncompact algebra and a maximally compact CSA. This number is a lower bound for the number of distinct regular orbit types. The last two columns of Figure \ref{fig:LAdata} record whether $\mathfrak{g}$ is a split real form, and the dimension of the leaves of the isotropic foliation of type-$0$ orbits induced by the Iwasawa decomposition, which is computed as $\dim\mathfrak{n}=\dim\mathfrak{g}-\dim\mathfrak{k}-\mathbb{R}-\operatorname*{rank}\mathfrak{g.}$

\begin{landscape}
\begin{figure}[ht]
{\small
\[%
\begin{array}
[c]{|l|c|c|c|c|c|c|c|c|c|}\hline
\makebox[.8in]{$\mathfrak{g}$} & \Delta & \Sigma & \dim_{\mathbb{R}}\mathfrak{g} & \operatorname*{rank}\mathfrak{g} & \dim\Omega & \mathbb{R}\text{-}\operatorname*{rank}\mathfrak{g} & |\mathbf{F}|_\text{max} & \text{split?} & \dim\mathfrak{n}\\\hline\hline
\mathfrak{sl}(n,\mathbb{R)} & A_{n-1} & A_{n-1} & (n-1)(n+1) & n-1 & n(n-1) & n-1 & [n/2] & \text{yes} & n(n-1)/2\\\hline
\mathfrak{sl}(n,\mathbb{H}) & A_{2n-1} & A_{n-1} & (2n-1)(2n+1) & 2n-1 & 2n(2n-1) & n-1 & 0 &  & 2n(n-1)\\\hline
\mathfrak{su}(p,q),~1\leq p\leq q & A_{p+q-1} & (BC)_{p}\text{ if }p<q & (p+q-1)(p+q) & p+q+1 & (p+q-2)(p+q) & p & p &  & p(2q-1)-2\\
&  & C_{p}\text{ if }p=q &  &  &  &  &  &  & \\\hline
\mathfrak{so}(2p,2q+1),~1\leq p\leq q & B_{p+q} & B_{2p} & (2(p+q)+1)(p+q) & p+q & 2(p+q)^{2} & 2p & 2p & p=q & 4pq\\\hline
\mathfrak{so}(2p,2q+1),~p>q\geq0 & B_{p+q} & B_{2q+1} & (2(p+q)+1)(p+q) & p+q & 2(p+q)^{2} & 2q+1 & 2q+1 &  & 4pq+2(p-q)-1\\\hline
\mathfrak{sp}(p,q),~1\leq p\leq q & C_{p+q} & (BC)_{p}\text{ if }p<q & (2(p+q)+1)(p+q) & p+q & 2(p+q)^{2} & p & p &  & 4pq-p\\
&  & C_{p}\text{ if }p=q &  &  &  &  &  &  & \\\hline
\mathfrak{sp}(n,\mathbb{R}) & C_{n} & C_{n} & n(2n+1) & n & 2n^{2} & n & n & \text{yes} & n^{2}\\\hline
\mathfrak{so}(2p+1,2q+1) & D_{p+q+1} & B_{p}\text{ if }p<q & (2(p+q+1)-1) & p+q+1 & (2(p+q+1)-2) & 2p+1 & 2p &  & 4pq+2q\\
\quad0\leq p\leq q &  & D_{p}\text{ if }p=q & \qquad\times(p+q+1) &  & \qquad\times(p+q+1) &  &  & p=q & \\
\quad\text{not }\mathfrak{so}(1,1)\text{ or }\mathfrak{so}(1,3) &  &  &  &  & &  &  &  & \\\hline
\mathfrak{so}(2p,2q) & D_{p+q} & B_{p}\text{ if }p<q & (2(p+q)-1) & p+q & (2(p+q)-2) & 2p & 2p & p=q & 4pq-2p\\
1\leq p\leq q,\ \text{not }\mathfrak{so}(2,2) &  & D_{p}\text{ if }p=q & \qquad\times(p+q) &  & \qquad\times(p+q) &  &  &  & \\\hline
\mathfrak{so}^{*}(2n) & D_{n} & (BC)_{n(n-1)/2},~n\text{ odd} & n(2n-1) & n & 2n(n-1) & [n/2] & [n/2] &  & n^{2}-n-[n/2]\\
&  & C_{n/2},~n\text{ even} &  &  &  &  &  &  & \\\hline
E\text{\textit{I}} & E_{6} & E_{6} & 78 & 6 & 72 & 6 & 4 & \text{yes} & 36\\\hline
E\text{\textit{II}} & E_{6} & F_{4} & 78 & 6 & 72 & 4 & 4 &  & 34\\\hline
E\text{\textit{III}} & E_{6} & (BC)_{2} & 78 & 6 & 72 & 3 & 2 &  & 30\\\hline
E\text{\textit{IV}} & E_{6} & A_{2} & 78 & 6 & 72 & 3 & 0 &  & 24\\\hline
E\text{\textit{V}} & E_{7} & E_{7} & 133 & 7 & 126 & 7 & 7 & \text{yes} & 63\\\hline
E\text{\textit{VI}} & E_{7} & F_{4} & 133 & 7 & 126 & 4 & 4 &  & 60\\\hline
E\text{\textit{VII}} & E_{7} & C_{3} & 133 & 7 & 126 & 3 & 3 &  & 51\\\hline
E\text{\textit{VIII}} & E_{8} & E_{8} & 248 & 8 & 240 & 8 & 8 & \text{yes} & 120\\\hline
E\text{\textit{IX}} & E_{8} & F_{4} & 248 & 8 & 240 & 4 & 4 &  & 110\\\hline
F\text{\textit{I}} & F_{4} & F_{4} & 52 & 4 & 48 & 4 & 4 & \text{yes} & 24\\\hline
F\text{\textit{II}} & F_{4} & (BC)_{1} & 52 & 4 & 48 & 1 & 1 &  & 15\\\hline
G & G_{2} & G_{2} & 14 & 2 & 12 & 2 & 2 & \text{yes} & 6\\\hline
\end{array}
\]
}
\caption{Orbit and foliation data for simple real Lie algebras.}\label{fig:LAdata}
\end{figure}
\end{landscape}

\bigskip

\end{document}